\documentclass[11pt, twoside, leqno]{amsart}  
%%%%%%%%%%%%%%%%%%%%%%%%%%%%%
%    Packages
%%%%%%%%%%%%%%%%%%%%%%%%%%%%%
\usepackage{lipsum}
\usepackage{amsfonts}
\usepackage{graphicx}
\usepackage{epstopdf}
\usepackage{algorithmic}
\usepackage{calligra}
\usepackage[dvipsnames]{xcolor}
\usepackage{amsfonts,amsmath,amsthm,amssymb}
\usepackage{mathtools}
\usepackage{hyperref}
\usepackage[makeroom]{cancel}
\usepackage{autonum}
\usepackage{hhline}
\usepackage{array}
\usepackage{diagbox}
\usepackage{tcolorbox}
\usepackage{mdframed}
\usepackage{multicol}
\usepackage{graphicx}
\usepackage{subcaption}
\usepackage{moreverb}
\usepackage{bbm}
\usepackage[margin=1.38in]{geometry}
\usepackage{todonotes}
\usepackage{scalerel,amssymb}
\allowdisplaybreaks
\usepackage{mathrsfs}  
\usepackage{lineno}
\usepackage{todonotes}
\usepackage{tikz}
\usepackage{appendix}
\usepackage{enumitem}
\usepackage{pgfplots}
\usetikzlibrary{arrows.meta}
\usepackage{siunitx}
\usepackage[numbers,sort&compress]{natbib}
\definecolor{mygreen}{HTML}{43a047}
\usepackage{subcaption}
\usepackage{doi}
%%%%%%%%%%%%%%%%%%%%%%%%%%%%%
%    Letters, Symbols
%%%%%%%%%%%%%%%%%%%%%%%%%%%%%

%\def\ooal{\tfrac{1}{\alpha}}

\newcommand{\Om}{\Omega}
\newcommand{\D}{\Delta}

\newcommand{\rhob}{\rho_{\textup{b}}}
\newcommand{\rhoa}{\rho_{\textup{a}}}
\newcommand{\Ca}{C_{\textup{a}}}
\newcommand{\Cb}{C_{\textup{b}}}
\newcommand{\kappaa}{\kappa_{\textup{a}}}
\newcommand{\Thetaa}{\Theta_{\textup{a}}}

%%%%%%%%%%%%%%%%%%%%%%%%%%%%%
%    Time-differentiation
%%%%%%%%%%%%%%%%%%%%%%%%%%%%%

\newcommand{\ddt}{\frac{\textup{d}}{\textup{d}t}}

%%%%%%%%%%%%%%%%%%%%%%%%%%%%%
%    Intgration-related
%%%%%%%%%%%%%%%%%%%%%%%%%%%%%
\newcommand{\dt}{\, \textup{d} t}
\newcommand{\ds}{\, \textup{d} s }

%%%%%%%%%%%%%%%%%%%%%%%%%%%%%
%    Norms
%%%%%%%%%%%%%%%%%%%%%%%%%%%%%

%%%%%%%%%%%%%%%%%%%%%%%%%%%%%
%    Products
%%%%%%%%%%%%%%%%%%%%%%%%%%%%%

%%%%%%%%%%%%%%%%%%%%%%%%%%%%%
%    Commands
%%%%%%%%%%%%%%%%%%%%%%%%%%%%%

%%%%%%%%%%%%%%%%%%%%%%%%%%%%%
%    Sets
%%%%%%%%%%%%%%%%%%%%%%%%%%%%%
\newcommand{\R}{\mathbb{R}} 
 
%%%%%%%%%%%%%%%%%%%%%%%%%%%%%
%    Spaces
%%%%%%%%%%%%%%%%%%%%%%%%%%%%%

\newcommand{\Honetwo}{{H_\diamondsuit^2(\Omega)}}
\newcommand{\Honethree}{{H_\diamondsuit^3(\Omega)}}

%%%%%%%%%%%%%%%%%%%%%%%%%%%%%
%   % Embedding constants
%%%%%%%%%%%%%%%%%%%%%%%%%%%%%

%%%%%%%%%%%%%%%%%%%%%%%%%%%%%
%    Theorem enviroments
%%%%%%%%%%%%%%%%%%%%%%%%%%%%%
\newtheorem{theorem}{Theorem}
\newtheorem{lemma}{Lemma}
\newtheorem{proposition}{Proposition}
\newtheorem{assumption}{Assumption}

\numberwithin{lemma}{section}
\numberwithin{proposition}{section}
\numberwithin{theorem}{section}
\numberwithin{equation}{section}
\makeatletter
\newcommand{\leqnomode}{\tagsleft@true}
\newcommand{\reqnomode}{\tagsleft@false}
\makeatother
%%%%%%%%%%%%%%%%%%%%%%%%%%%%%
%    Miscellaneous
%%%%%%%%%%%%%%%%%%%%%%%%%%%%%

\definecolor{grey}{rgb}{0.5,0.5,0.5}

\title[Local well-posedness of a coupled Westervelt--Pennes model]{Local well-posedness of a coupled \\[1mm] Westervelt--Pennes model of nonlinear ultrasonic heating}    
\subjclass[2010]{35L70, 35K05}      
   
\keywords{ultrasonic heating, Westervelt's equation,   nonlinear acoustics, Pennes bioheat equation, HIFU}  
    
\author{Vanja Nikoli\'c$^\dagger$}
\thanks{$^\dagger$Department of Mathematics,
	Radboud University, 
	Heyendaalseweg 135,
	6525 AJ Nijmegen, The Netherlands (\href{vanja.nikolic@ru.nl}{vanja.nikolic@ru.nl})}
\author{Belkacem Said-Houari$^\ddag$}
\thanks{$^\ddag$Department of Mathematics, College of Sciences, University of
	Sharjah, P. O. Box: 27272, Sharjah, United Arab Emirates (\href{bhouari@sharjah.ac.ae}{bhouari@sharjah.ac.ae})}
\begin{document}
	\vspace*{8mm}
	\begin{abstract}
		High-Intensity Focused Ultrasound (HIFU) waves are known to induce localized heat to a targeted area during medical treatments. In turn, the rise in temperature influences their speed of propagation. This coupling affects the position of the focal region as well as the achieved pressure and temperature values. In this work, we investigate a mathematical model of nonlinear ultrasonic heating based on the Westervelt wave equation coupled to the  Pennes bioheat equation that captures this so-called thermal lensing effect. We prove that this quasi-linear model is well-posed locally in time and does not degenerate under a smallness assumption on the pressure data.
					\end{abstract}   
	\vspace*{-7mm}  
	\maketitle             

\section{Introduction}
High-Intensity Focused Ultrasound (HIFU) is an innovative medical tool that relies on focused sound waves to induce localized heating to the targeted tissue~\cite{ter2016hifu}. Due to its non-invasive nature and relatively brief treatment time, it has excellent potential to be used in the therapy of various benign and malignant  tumors; see, e.g.,~\cite{maloney2015emerging, wu2003randomised, li2010noninvasive, hsiao2016clinical, hahn2018high}. The ability to accurately determine the properties of the pressure and temperature field in the focal region is crucial in these procedures and motivates the research into the validity of the corresponding mathematical models.\\
\indent It is well-known that the heating of tissue influences the speed of propagation of sound waves and, in turn, the position of the focal region; this effect is commonly referred to as thermal lensing~\cite{connor2002bio, hallaj1999fdtd, hallaj2001simulations}. In this work, we analyze a mathematical model of nonlinear ultrasonic heating that captures this effect. More precisely, we study a coupled problem consisting of the Westervelt wave equation of nonlinear acoustics~\cite{westervelt1963parametric}:
	\begin{equation}\label{p_Eq}
		p_{tt}-c^2(\Theta)\Delta p - b \Delta p_t = k(\Theta) \left(p^2\right)_{tt}
	\end{equation}
	and the Pennes bioheat equation~\cite{pennes1948analysis}:
		\begin{equation} \label{Heat_Eq}
	\rhoa \Ca\Theta_t -\kappaa\Delta \Theta+ \rhob \Cb W(\Theta-\Thetaa) = \mathcal{Q}(p_t),
	\end{equation}
	where $p$ is the acoustic pressure, $\Theta$ the temperature, and $\mathcal{Q}(p_t)$ is the absorbed acoustic energy. We refer to Section~\ref{Sec:ProblemSetting} below for further details on the modeling and the involved material parameters.\\
	\indent To the best of our knowledge, this is the first work dealing with a rigorous mathematical  analysis of a coupled Westervelt--Pennes model. Westervelt's equation is a quasilinear strongly damped (for $b>0$) wave equation that has been extensively studied by now in various settings with constant material parameters; see, e.g.,~\cite{kaltenbacher2009global, kaltenbacher2011well, meyer2011optimal, kaltenbacher2019well, kaltenbacher2020parabolic} and the references given therein, where results concerning  local well-posedness, global well-posedness, and asymptotic behavior of the solution have been established. The results on the well-posedness of the  Westervelt equation with an additional strong nonlinear damping and with $L^\infty(\Omega)$ varying coefficients have been obtained in~\cite{brunnhuber2014relaxation, nikolic2015local}. We mention that this wave equation  can also be rigorously recovered in the limit of a third-order nonlinear acoustic equation for vanishing thermal relaxation time; see the analysis in~\cite{bongarti2021vanishing, kaltenbacher2019jordan}. \\ 
	\indent A distinguishing feature of the present quasilinear thermo-acoustic problem is the dependence of propagation speed on the temperature, which we assume to be polynomial (in accordance with the real-world setting) and non-degenerate. Having temperature-dependent medium parameters presents a challenge due to the higher regularity the pressure should possess to tackle the nonlinearities. To resolve this issue, our theoretical approach relies on higher-order energy analysis of a suitable linearization combined with a fixed-point argument, under the assumption of smooth and small (with respect to pressure) data. Although the heat equation \eqref{Heat_Eq} has regularizing properties,  it does not seem feasible to transfer these to the pressure equation \eqref{p_Eq} and make use of the damping property of heat conduction, as in the classical thermo-elastic systems; see, e.g.,~\cite{racke2000evolution, lasiecka2017global, lasiecka2019long} and the references given therein. This issue arises due to the very weak nonlinear coupling in the present model. \\
	 \indent  A critical step in the analysis is handling the higher-order time-derivative of the pressure in the nonlinear term; that is, $k(\Theta) \left(p^2\right)_{tt}$. Due to the temperature-dependent coefficients, we have to rely on higher-order energies compared to the analysis of Westervelt equation in homogeneous media in~\cite{kaltenbacher2009global} and assume
	 \[
	 (p, p_t)_{\vert t=0} =(p_0, p_1)\in H^3(\Omega) \times H^2(\Omega).
	 \] 
	 Another well-known difficulty in the analysis of the Westervelt equation is related to the possible degeneracy for large values of $p$ since the factor in front of $p_{tt}$ in \eqref{p_Eq} can be written as $1-2k(\Theta)p$. This invokes the condition $1-2k(\Theta)p>0$ almost everywhere, which in turn requires $ \|p\|_{L^\infty}$ to remain small enough in time. The issue is commonly resolved by using a Sobolev embedding under the assumption of small pressure data, e.g., $H^2(\Omega)\hookrightarrow L^\infty(\Omega)$, as in~\cite{kaltenbacher2009global}. \\
	\indent We organize the rest of our exposition as follows. We provide more detailed insight into mathematical bio-acoustic modeling in Section~\ref{Sec:ProblemSetting}. Section~\ref{Sec:LinProblem} focuses on the energy analysis of a (partially) linearized uncoupled problem. In Section~\ref{Sec:NonLinProblem}, we present the study of the coupled nonlinear model by relying on the result from the previous section and Banach's fixed-point theorem. Our main well-posedness result is contained in Theorem~\ref{Thm:NlWellP}. We conclude the paper with a discussion and an outlook on future work.	
	\section{A model of ultrasonic heating based on the Westervelt equation} \label{Sec:ProblemSetting}
	Volume coupling of the acoustic pressure $p$ to the temperature field $\Theta$ is achieved via appropriate source terms and the use of temperature-dependent acoustic material parameters; see, e.g., \cite{cavicchi1984heat, norton2016westervelt, connor2002bio, hallaj1999fdtd, shevchenko2012multi}. In this work, we model the soft tissue as a thermoviscous fluid and consider the Westervelt equation in pressure form with the temperature-dependent speed of sound $c=c(\Theta)$:   
	\begin{equation} \label{Westervelt}
	\begin{aligned}
	p_{tt}-c^2(\Theta)\Delta p - b \Delta p_t = k(\Theta) \left(p^2\right)_{tt},
	\end{aligned}
	\end{equation}
	where the right-hand side coefficient is given by
	\begin{align} \label{k_q_relation}
	k(\Theta)=\frac{1}{\rho c^2(\Theta)}\beta_{\textup{acou}}.
	\end{align}
	Here, $\rho$ is the medium density and $\beta_{\textup{acou}}$ the acoustic coefficient of nonlinearity. The third term in \eqref{Westervelt} models thermoviscous losses. The parameter $b$ is known as the sound diffusivity~\cite{lighthill1956viscosity}. Assuming harmonic excitation with angular frequency $\omega$, it is connected to the absorption coefficient $\alpha$ via
	\begin{equation}  
	\begin{aligned}
	b=\frac{\alpha c_{\textup{a}}^3}{\omega^2},
	\end{aligned}    
	\end{equation}
where $c_{\textup{a}}$ is the ambient speed of sound (in the tissue); cf.~\cite{norton2016westervelt}. Equation \eqref{Westervelt} is obtained from the model considered in~\cite{connor2002bio} under the assumption of constant medium density and upon approximating $\dfrac{b}{c^2}p_{ttt}$ by $b \Delta p_t$. Note that if the attenuation obeys a frequency power law, equation \eqref{Westervelt} generalizes to involve a fractional damping term; see, e.g.,~\cite{norton2016westervelt}. This case is thus of interest for future analysis as well, but outside the scope of the current work. \\
\indent The temperature distribution in the tissue is modeled by the Pennes bioheat equation~\cite{pennes1948analysis} with a nonlinear source term:
	$$ 
	\rhoa \Ca\Theta_t -\kappaa\Delta \Theta+ \rhob \Cb W(\Theta-\Thetaa) = \mathcal{Q}(p_t).
	$$ 
	The function $\mathcal{Q} = \mathcal{Q}(p_t)$ represents the acoustic energy absorbed by the tissue at any given point. The term $\rhob \Cb W(\Theta-\Thetaa)$ models the removal of heat by blood circulation. Here, $\rhob$ and $\Cb$ are the density and specific heat capacity of blood,    respectively, and $W$ is the volumetric perfusion rate of the tissue measured in milliliters of blood per milliliter of tissue per second. The values of typical material properties in the human tissue can be found, for example, in~\cite[Table 3]{connor2002bio}. The coefficients $\rhoa$ and $\kappaa$ denote the ambient density and thermal conductivity (i.e., the tissue density and thermal conductivity). $\Ca$ is the ambient heat capacity and $\Thetaa$ is the ambient temperature. In the body, the latter is usually taken to be $\SI{37}{\celsius}$; see~\cite{connor2002bio}. \\
	\indent Altogether, we end up with the following coupled problem:
	\begin{subequations} \label{coupled_problem}
			\begin{equation} \label{coupled_problem_eq}
	\left\{ \begin{aligned}
	&p_{tt}-q(\Theta)\Delta p - b \Delta p_t = k(\Theta)\left(p^2\right)_{tt}, \qquad &&\text{in} \ \Omega \times (0,T), \\[1mm]
	& 	\rhoa \Ca\Theta_t -\kappaa\Delta \Theta+ \rhob \Cb W(\Theta-\Thetaa) = \mathcal{Q}(p_t), \qquad &&\text{in} \ \Omega \times (0,T),
	\end{aligned} \right.
	\end{equation}
where we have introduced the function \[q(\Theta)=c^2(\Theta).\]
For simplicity, we consider \eqref{coupled_problem_eq} together with homogeneous Dirichlet boundary conditions  
\begin{equation} \label{coupled_problem_BC}
p\vert_{\partial \Om}=0, \qquad \Theta\vert_{\partial \Om}=0,  
\end{equation}
and the initial data
\begin{equation} \label{coupled_problem_IC}
(p, p_t)\vert_{t=0}= (p_0, p_1), \qquad \Theta \vert_{t=0}=\Theta_0.
\end{equation}
\end{subequations}
The constant medium parameters appearing in \eqref{coupled_problem} are all assumed to be positive. The speed of sound $c=c(\Theta)$ typically exhibits polynomial dependence on the temperature. In water, for instance, it is taken to be
\[
\begin{aligned}
c(\Theta)=&\, \begin{multlined}[t]1402.39 + 5.0371\,\Theta - 5.8085 \times 10^{-2} \Theta^2 + 3.3420 \times 10^{-4} \Theta^3\\ - 1.4780 \times 10^{-6} \Theta^4
+ 3.1464 \times 10^{-9} \Theta^5; \end{multlined}
\end{aligned}
\] 
see~\cite[\S 2.2]{connor2002bio} and \cite{bilaniuk1993speed}. We thus make the following assumptions on the function $q$ in our analysis. Note that throughout the paper, we use $x \lesssim y$ to denote $x \leq C y$, where $C>0$ is a generic constant that may depend on $\Omega$, the final time $T$, and medium parameters.
\begin{assumption}\label{Assumption_1}
Let $q \in C^2(\R)$. We assume that there exists $q_0>0$, such that
	\[
	q(s) \geq q_0  \qquad \forall s \in \R.
	\]
Furthermore, there exist $\gamma_1\geq 0$ and $C_1>0$, such that
\begin{equation}
\begin{aligned}
|q''(s)|\leq \, C_1(1+|s|^{\gamma_1}) \qquad \forall s \in \R.
\end{aligned}
\end{equation}
\end{assumption}
\noindent By these assumptions and Taylor's formula, it further follows that 
\begin{equation}\label{q_prime_Assumption}
|q'(s)| \lesssim 1+|s|^{\gamma_1+1}.
\end{equation}
\noindent The function $k$ is assumed to be related to $q$ via \eqref{k_q_relation} throughout this work. Therefore, we have
\begin{equation}\label{Assumption_k_1}
\begin{aligned}
|k(\Theta)| \lesssim&\, \frac{1}{q_0}.
\end{aligned}
\end{equation}
Furthermore, since
\begin{equation}
\begin{aligned}
|k'(\Theta)| \lesssim&\, \frac{1}{q_0^2}|q'(\Theta)| \lesssim \frac{1}{q_0^2}(1+|\Theta|^{\gamma_1+1}), \\
|k''(\Theta)| \lesssim&\, \frac{1}{q_0^2}|q''(\Theta)|+\frac{1}{q_0^3}|q'(\Theta)|^2 
\lesssim\, \frac{1}{q_0^2}(1+|\Theta|^{\gamma_1})+\frac{1}{q_0^3}(1+|\Theta|^{\gamma_1+1})^2,
\end{aligned}
\end{equation}
we conclude that there exists $\gamma_2>0$, such that
\begin{equation}\label{Assumption_k_2}
\begin{aligned}
|k'(\Theta)| \lesssim\,1+|\Theta|^{\gamma_2+1},\qquad |k''(\Theta)| \lesssim\, 1+|\Theta|^{\gamma_2}.
\end{aligned}
\end{equation}
\subsection*{Modeling the absorbed acoustic energy} The acoustic energy absorbed by the tissue is represented by the source term $\mathcal{Q}=\mathcal{Q}(p_t)$ in the heat equation. We will make the following general assumptions concerning its properties in our analysis, which allow us to cover important particular cases from the literature.
\begin{assumption} \label{Q_assumption}
The mapping $Q$ is Lipschitz continuous on bounded subsets of the space $L^{\infty}(0,T;L^\infty(\Omega))$ with values in $L^2(0,T; L^2(\Omega))$, that is,
\begin{equation}\label{Lipschitz_Assumption}
\Vert \mathcal{Q} (u)-\mathcal{Q} (v)\Vert_{L^2(L^2)}\lesssim (\Vert u\Vert_{L^{\infty}(L^\infty)}+\Vert v\Vert_{L^{\infty}(L^\infty)}) \Vert u-v\Vert_{L^2(L^2)},
\end{equation}
and such that $\mathcal{Q}(0)=0$. Additionally,  
\begin{equation}\label{Lipschitz_Assumption_2}
\|\partial_t[\mathcal{Q}(u)-\mathcal{Q}(v)]\|_{L^2(L^2)}\lesssim \Vert u\Vert_{L^2(L^\infty)} \Vert u_t-v_t\Vert_{L^\infty(L^2)}+\Vert v_{t}\Vert_{L^\infty (L^2)} \Vert u-v\Vert_{L^2(L^\infty)}.  
\end{equation} 
\end{assumption}   
\noindent Note that by plugging in $v=0$ above, these assumptions further imply that
\begin{equation} \label{Q_bounds}
\begin{aligned}
\Vert \mathcal{Q} (u)\Vert_{L^2(L^2)}\lesssim&\, \Vert u\Vert_{L^{\infty}(L^\infty)} \Vert u\Vert_{L^2(L^2)}, \\
\|\partial_t [\mathcal{Q}(u)]\|_{L^2(L^2)}\lesssim&\, \Vert u\Vert_{L^2(L^\infty)} \Vert u_t\Vert_{L^\infty(L^2)}. 
\end{aligned}
\end{equation}
\noindent In~\cite{norton2016westervelt, pierce2019acoustics}, the absorption term is modeled as
\begin{align} \label{absorbed_energy_1}
\mathcal{Q}(p_t)= \frac{2 b}{\rhoa c^4_{\textup{a}}}p_t^2,
\end{align}
which clearly satisfies our assumptions if $p_t \in L^\infty(0,T; L^\infty(\Omega))$ and $p_{tt} \in L^\infty(0,T; L^2(\Omega))$. More commonly, the absorption term appears in the literature averaged over a certain time interval. In, e.g,~\cite[\S 2.2]{connor2002bio}, the absorbed energy is given by
\begin{align} \label{absorbed_energy_2}
\mathcal{Q}(p_t) = \frac{1}{j \tau} \frac{2b}{\rhoa c_{\textup{a}}^4} \int_{t'}^{t'+j \tau} p_t^2 \dt.
\end{align}
Here $j$ is a positive integer, $\tau$ is the period of ultrasound excitation and $t'$ is a sufficient time from the start of the simulation so that a steady-state has been reached. In~\cite{hallaj1999fdtd}, the absorbed energy is averaged over the whole time interval
\begin{align} \label{absorbed_energy_3}
\mathcal{Q}(p_t) = \frac{1}{T} \frac{2b}{\rhoa c_{\textup{a}}^4} \int_{0}^{T} p_t^2 \dt.
\end{align}
Both of these functionals satisfy Assumption~\ref{Q_assumption}. In case of \eqref{absorbed_energy_3}, for example,  we note that for all $t\in [0,T]$, and by using Minkowski's inequality (see \cite[Proposition 1.3]{bahouri2011fourier}), 
\begin{equation} \label{absorbed_energy_3_}
\begin{aligned}
\left\Vert \left\Vert \frac{1}{T}\int_0^T (u_t^2-v_t^2) \dt  \right\Vert_{L^2(\Omega)}\right\Vert_{L^2(0,t)}
\leq&\, \left\Vert\frac{1}{T} \int_0^T  \Vert u_t^2-v_t^2\Vert_{L^2(\Omega)}\dt\right\Vert_{L^2(0,t)}\\
=&\,  \left\Vert\frac{1}{T} \int_0^T  (\|u_t\|_{L^\infty}+\|v_t\|_{L^\infty})\|u_t-v_t\|_{L^2}\dt\right\Vert_{L^2(0,t)} \\  
\lesssim&\,  (\|u_t\|_{L^\infty(L^\infty)}+\|v_t\|_{L^\infty(L^\infty)})\|u_t-v_t\|_{L^2(L^2)}.
\end{aligned}
\end{equation}
In case of a time-averaged absorbed energy, we have $	\Vert \partial_t[\mathcal{Q}(u_t)-\mathcal{Q}(v_t)]\Vert_{L^2(L^2)}=0.$
\subsection*{Auxiliary results} 
We collect here several useful inequalities that are repeatedly used in the analysis below. We assume throughout that $\Omega \subset \R^d$, where $d \in \{1,2 ,3\}$, is an open, bounded, and sufficiently smooth set. We will often rely on the Ladyzhenskaya inequality for $u \in H^1(\Omega)$:
\begin{equation}  \label{Ladyz_Ineq}
\Vert u \Vert_{L^4}\leq 
C\Vert u \Vert_{L^2}^{1-d/4}\Vert  u\Vert_{H^1}^{d/4}. %, \qquad d\leq 4. 
\end{equation} 
By using \eqref{Ladyz_Ineq} together with Young's inequality, we further find that for $u \in H_0^1(\Omega)$ and any $\varepsilon>0$ 
\begin{equation}\label{Ehrlings_ineq}
\begin{aligned}
\|u\|^2_{L^4}\lesssim \,\Vert u\Vert_{L^2}^{2(1-d/4)}\Vert u\Vert_{H^1}^{d/2}
\lesssim &\, \Vert u\Vert_{L^2}^{2(1-d/4)}\Vert \nabla u\Vert_{L^2}^{d/2}\\
\lesssim &\,
\,\frac{1}{\tilde{\varepsilon}^{\frac{4}{4-d}}}\Vert u\Vert_{L^2}^{2} +\tilde{\varepsilon}^{4/d} \Vert \nabla u\Vert_{L^2}^{2} \\
=&\,
\,C(\varepsilon)\Vert u\Vert_{L^2}^{2} +\varepsilon \Vert \nabla u\Vert_{L^2}^{2}\end{aligned}
\end{equation}
with $\varepsilon=C\tilde{\varepsilon}^{4/d}$. This estimate can also be obtained (on bounded domains) by employing Ehrling's lemma; see~\cite[Lemma 8.2]{robinson2001infinite}.\\
\indent Further, given $a \in H^{-1}(\Omega)$ and $b \in W^{1,3}(\Omega) \cap L^\infty(\Omega)$, the following bound holds:
\begin{equation} \label{Hneg_estimate}
\begin{aligned}
\|ab\|_{H^{-1}} \lesssim&\, \|a\|_{H^{-1}}(\|\nabla b\|_{L^3}+\|b\|_{L^\infty}).
\end{aligned}
\end{equation}
To keep the presentation self-contained, we also state here the version of Gronwall's inequality that will be employed in the proofs.
\begin{lemma}\label{Lemma_Gronwall}
Let $I=[0, t]$ and let $a: I\rightarrow \R$ and $b:I\rightarrow \R$ be locally integrable  functions. Let $v$ be non-negative and integrable. Suppose that $u: I\rightarrow \R$ is in $C^{1}(I)$  and satisfies: 
\begin{equation}
u^\prime(t)+v(t)\leq a(t)u(t)+b(t),\quad \text{for $t\in I$}\quad \text{and}\quad u(0)=u_0. 
\end{equation}
Then it holds that 
\begin{equation}
u(t)+ \int_{0}^t v(s)\ds\leq u_0e^{A(t)}+\int_{0}^t b(s)e^{A(t)-A(s)}\ds,
\end{equation}
where
\begin{equation}
A(t)= \int_{0}^t a(s)\ds.
\end{equation}
\end{lemma}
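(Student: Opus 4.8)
The plan is to use the classical integrating-factor technique, adapted to accommodate the non-negative dissipation term $v$. Since $A$ is an antiderivative of $a$ with $A(0)=0$, the natural integrating factor for the differential inequality $u' + v \le a u + b$ is $e^{-A(t)}$, which converts the leading part of the left-hand side into an exact derivative. I would therefore set $w(t) := e^{-A(t)} u(t)$, which is well defined and differentiable because $u \in C^1(I)$ while $A$ is absolutely continuous (as $a$ is locally integrable) with $A' = a$ almost everywhere.

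Differentiating and substituting the hypothesis $u'(t) \le a(t)u(t) + b(t) - v(t)$ gives, for a.e.\ $t \in I$,
\[
w'(t) = e^{-A(t)}\big(u'(t) - a(t) u(t)\big) \le e^{-A(t)}\big(b(t) - v(t)\big).
\]
Integrating over $[0,t]$ and using $w(0) = u(0) = u_0$ (which holds since $A(0)=0$) yields
\[
e^{-A(t)} u(t) \le u_0 + \int_0^t e^{-A(s)} b(s) \ds - \int_0^t e^{-A(s)} v(s) \ds.
\]
Multiplying through by the positive factor $e^{A(t)}$ and rearranging produces the intermediate estimate
\[
u(t) + \int_0^t e^{A(t) - A(s)} v(s) \ds \le u_0 e^{A(t)} + \int_0^t e^{A(t)-A(s)} b(s) \ds,
\]
which already matches the claimed right-hand side but still carries an exponential weight on the dissipation integral.

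The final step, and the only point that requires genuine care, is to discard the weight in front of $\int_0^t v \ds$. Because $v \ge 0$, it suffices that $e^{A(t)-A(s)} \ge 1$, i.e.\ $A(t) \ge A(s)$, for all $0 \le s \le t$; this holds exactly when $A$ is non-decreasing, that is, when the coefficient $a$ is non-negative. Under this sign condition one has $\int_0^t v \ds \le \int_0^t e^{A(t)-A(s)} v(s) \ds$, and substituting into the displayed inequality gives the asserted bound. I expect no real obstacle in this argument: it is entirely elementary, and the only subtlety worth flagging is that the clean, unweighted form of the conclusion relies on $a \ge 0$ to drop the exponential factor on the $v$-term — a condition that is indeed met in the energy estimates to which this lemma is subsequently applied.
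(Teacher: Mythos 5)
Your integrating-factor argument is correct and complete, and it is more self-contained than the paper's own treatment, which gives no computation at all and merely defers to a combination of two cited results. The substantive point is precisely the one you flag yourself: after multiplying by $e^{A(t)}$, the dissipation integral carries the weight $e^{A(t)-A(s)}$, and replacing it by the unweighted $\int_0^t v(s)\ds$ requires $A(t)\ge A(s)$ for $0\le s\le t$, i.e.\ $a\ge 0$ almost everywhere. This is not cosmetic: without that sign condition the conclusion as printed is actually false. Take $a\equiv -M<0$, $b\equiv 0$, $v\equiv c>0$ and let $u$ solve $u'=-Mu-c$ with $u(0)=u_0$, so that the hypothesis holds with equality; then
\begin{equation}
u(t)+\int_0^t v(s)\ds-u_0e^{A(t)}=\frac{c}{M}\bigl(e^{-Mt}-1+Mt\bigr)>0 \quad\text{for } t>0,
\end{equation}
since $e^{-x}-1+x>0$ for $x>0$, violating the asserted bound. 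So the lemma is missing the hypothesis $a\ge 0$ (or else the weight $e^{A(t)-A(s)}$ must be retained inside the $v$-integral). In every application in the paper one has $a(t)=C(1+\Lambda(t))$ with $\Lambda\ge 0$ a sum of norms, so the sign condition is satisfied and your proof covers all actual uses; you were right to identify this as the only delicate step, and it would be worth adding $a\ge 0$ to the statement.
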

\begin{proof}
	The inequality follows by combining the arguments of~\cite[Appendix B]{Carl_Heikkila_2000} and \cite[Lemma 3.1]{garcke2017well}. 
\end{proof}	

	\section{Analysis of a linearized problem} \label{Sec:LinProblem}
We first analyze a de-coupled linearization of \eqref{coupled_problem_eq}, given by
	\begin{equation} \label{coupled_problem_linearized}
	\left\{ \begin{aligned}
	& \alpha(x,t)p_{tt}-r(x,t)\Delta p - b \Delta p_t= f_1(x,t), \quad &&\text{in} \ \Omega \times (0,T), \\[1mm]
	& 	\rhoa \Ca\Theta_t -\kappaa\Delta \Theta+ \rhob \Cb W(\Theta-\Thetaa) = \mathcal{Q}(p_t)+f_2(x,t), \quad &&\text{in} \ \Omega \times (0,T),
	\end{aligned} \right.
	\end{equation}
and supplemented by the boundary \eqref{coupled_problem_BC} and initial \eqref{coupled_problem_IC} conditions.
To facilitate the analysis, we make the following regularity and non-degeneracy assumptions on the involved coefficients and source terms.  
\begin{assumption}\label{Assumptions:LinProblem}
Given $T>0$, the variable coefficients and the source terms satisfy the following assumptions.
\begin{itemize}    
	\item[\bf (A)] Let $\alpha \in L^\infty(0,T; L^\infty(\Omega)\,\cap\, W^{1,3}(\Omega))$ and  $\alpha_t \in L^2(0,T; L^3(\Omega))\, \cap \, L^{\infty}(0,T; L^2(\Omega))$. 
	 Further, we assume that there exist $\alpha_0$, $\alpha_1>0$, such that  
	\[
	\alpha_0 \leq \alpha(x,t) \leq \alpha_1 \quad \text{a.e.\ in} \ \Omega \times (0,T).    
	\]
	\item[\bf (R)]  We assume that $r \in L^\infty(0,T; L^\infty(\Omega)\, \cap\, W^{1,4}(\Omega)) $ and $r_t \in L^\infty(0,T; L^{2}(\Omega))$. Further, there exist $r_0$, $r_1>0$, such that
	\[
	r_0 \leq r(x,t) \leq r_1 \quad \text{a.e.\ in} \ \Omega \times (0,T).
	\]
	\item[\bf (F)]  Let $f_1 \in L^2(0,T; H_0^1(\Om))$, $\partial_t f_1 \in L^2(0,T; H^{-1}(\Omega))$, and $f_2 \in H^1(0,T; L^2(\Omega))$.
\end{itemize}
\end{assumption}
\noindent From the last assumption, by~\cite[Theorem 7.22]{salsa2016partial}, we have $f_1 \in C([0,T]; L^2(\Omega))$ and
\begin{equation} \label{est_f1_embed}
\max_{0 \leq t \leq T} \|f_1(t)\|_{L^2} \leq C_T(\|f_1\|_{L^2(H^1)}+\|\partial_t f_1\|_{L^2(H^{-1})}).
\end{equation}
\subsection*{Energies} To accommodate the energy analysis, we introduce the following lower and higher-order acoustic energies:
\begin{equation} \label{acoustic_energies}
\begin{aligned}  
E_0[p](t)=&\, \frac{1}{2}\left\{\|\sqrt{\alpha(t)} p_t(t)\|^2_{L^2}+\|\sqrt{r}(t) \nabla p(t)\|^2_{L^2}\right\}, \\
E_1[p](t)=&\, \frac{1}{2}\left \{\|\sqrt{\alpha(t)}p_{tt}(t)\|^2_{L^2}+\|\sqrt{r}(t) \nabla p_t(t)\|^2_{L^2}+\|\sqrt{r(t)}\Delta p(t)\|^2_{L^2}\right\},\\  
E_2[p](t)=&\,\frac12  \|\sqrt{b}\nabla \Delta p(t) \|_{L^2}^2.   
\end{aligned}
\end{equation}
In the analysis, we will also use the combined acoustic energy
\begin{equation}
\mathcal{E}[p](t)=E_0[p](t)+E_1[p](t)+E_2[p](t), \qquad t \in [0,T]
\end{equation}
with the associated dissipation rate 
\begin{equation}
\begin{aligned}  
\mathcal{D}[p](t)=&\, \begin{multlined}[t]\|\sqrt{b}\nabla p_{tt}(t)\|^2_{L^2}+\|\sqrt{b}\Delta p_t(t)\|^2_{L^2}+\|\sqrt{r}(t)\nabla \Delta p(t)\|^2_{L^2}
+\|\sqrt{b}\nabla p_t(t)\|^2_{L^2}. \end{multlined}
\end{aligned}  
\end{equation}
The initial acoustic energy is set to
\begin{equation}
\begin{aligned}
\mathcal{E}[p](0)=&\, \begin{multlined}[t]\frac{1}{2}\left \{\|\sqrt{\alpha(0)}p_1\|^2_{L^2}+\|\sqrt{r(0)}\nabla p_0\|^2_{L^2}+{ \|\sqrt{r(0)}\nabla p_1\|^2_{L^2}}\right.
\left.
\right. \\
\left. +  \|\sqrt{\alpha(0)}p_{tt}(0)\|^2_{L^2} +\|\sqrt{b}\Delta\nabla  p_0\|^2_{L^2} +\|\sqrt{r(0)}\Delta p_0\|_{L^2}^2 \right\}  \end{multlined}
\end{aligned}  
\end{equation}
with
\[
p_{tt}(0)=\alpha(0)^{-1}(r(0)\Delta p_0+b \Delta p_1 +f_1(0)).
\]

\noindent Further, the heat energy is given by
\begin{equation}
\begin{aligned}
\mathcal{E}[\Theta](t)=&\, \frac{1}{2}\left \{\|\Theta(t)\|_{H^2}^2+\|\Theta_t(t)\|^2_{L^2}  \right\} 
\end{aligned}
\end{equation}
with the associated dissipation
\begin{equation}
\begin{aligned}
\mathcal{D}[\Theta](t)=&\,  \|\Theta_t(t)\|^2_{H^1}+\|\Theta_{tt}(t)\|^2_{H^{-1}}. 
\end{aligned}
\end{equation}
\subsection*{Solution spaces} To formulate the existence result, we also introduce the following solutions spaces for the pressure:
\begin{equation} \label{space_Xp}
\begin{aligned}
X_{p}= \{p \in L^\infty(0,T; \Honethree):&\  p_t \in  L^\infty(0,T; \Honetwo)\, \cap \, L^2(0,T; \Honethree), \\
&\, p_{tt} \in L^\infty(0,T; L^2(\Omega)) \cap L^2(0,T; H_0^1(\Om)), \\
&\, p_{ttt}\in L^2(0,T; H^{-1}(\Omega))\},
\end{aligned}
\end{equation}  
and the temperature:
\begin{equation} \label{space_XTheta}
\begin{aligned}
X_{\Theta}=\{\Theta \in C([0,T]; \Honetwo):& \, \Theta_t \in C([0,T]; L^2(\Om)) \cap L^2(0,T; H^1(\Omega)), \\ &\,\Theta_{tt}\in L^2(0,T; H^{-1}(\Omega))\},
\end{aligned}
\end{equation}
with the short-hand notation 
\begin{equation} \label{sobolev_withtraces}
\begin{aligned}
\Honetwo=&\,H_0^1(\Omega)\cap H^2(\Omega),\\
\Honethree=&\, \left\{u\in H^3(\Omega)\,:\, \mbox{tr}_{\partial\Omega} u = 0, \  \mbox{tr}_{\partial\Omega} \D u = 0\right\}.
\end{aligned}
\end{equation}
We claim that the linearized problem is well-posed under the above-made assumptions.
\begin{proposition} \label{Prop:LinWellp}
Let $T>0$ and let Assumption~\ref{Assumptions:LinProblem} hold. Further, assume that
\begin{equation}    
\begin{aligned}  
(p_0, p_1) \in \Honethree \times \Honetwo, \quad \Theta_0 \in \Honetwo.
\end{aligned}
\end{equation}
Then there exists a unique solution $(p, \Theta) \in X_p \times X_{\Theta}$ of \eqref{coupled_problem_linearized}. Furthermore, the acoustic pressure satisfies 
\begin{equation}\label{Main_Energy_Estimate_0}
\begin{aligned}
&\mathcal{E}[p](t)+\|\Delta p_t(t)\|^2_{L^2}  +\int_0^t\mathcal{D}[p](s)\ds+\int_0^t(\|p_{ttt}(s)\|^2_{H^{-1}}+\|\nabla \Delta p_t(s)\|^2_{L^2})\ds\\
\lesssim &\, \mathcal{E}[p](0)\exp{\left(\int_0^t (1+\Lambda(s))\ds\right)}+\int_0^t\exp{\left(\int_s^t (1+\Lambda(\sigma))\textup{d}\sigma\right)}\mathbb{F}(s)\ds
\end{aligned}
\end{equation}
a.e.\ in time, with    
\begin{equation} \label{def_Lamba}
%\Lambda(t)=\left\Vert r_t(t)\right\Vert_{L^2}+\left\Vert \alpha_t(t)\right\Vert_{L^2}+\|\nabla r(t)\|_{L^4}+\Vert \alpha_t\Vert_{L^4}^2+(1+\left\|\nabla \alpha\right\|_{L^3}^2)\Vert r_t\Vert_{L^4}^2.   
\Lambda(t)=\left\Vert r_t(t)\right\Vert_{L^2}^2+\|\nabla r(t)\|_{L^4}+\left\Vert \alpha_t(t)\right\Vert_{L^2}+\Vert \alpha_t(t)\Vert_{L^3}^2+\left\|\nabla \alpha(t)\right\|_{L^3}^2 
\end{equation}  
and    
\begin{equation}\label{F_Terms}
\mathbb{F}(t)=\Vert f_1(t)\Vert_{H^1}^2+(1+\left\|\nabla \alpha(t)\right\|_{L^3}^2)\Vert \partial_tf_1(t)\Vert_{H^{-1}}^2,
\end{equation}
whereas the temperature satisfies
\begin{equation}
\begin{aligned}        
&\mathcal{E}[\Theta](t)+\int_0^t\mathcal{D}[\Theta](s)\ds\\
\leq&\, C_T \big(\|\Theta_0\|_{\Honetwo}^2+\|f_2\|^2_{H^1(L^2)}+ \|p_t\|^2_{L^\infty(L^\infty)}\|p_t\|^2_{L^2(L^2)}+ \|p_t\|_{L^2(L^\infty)}^2\| p_{tt}\|^2_{L^\infty(L^2)}+1\big)
\end{aligned}
\end{equation}
for all $t \in [0,T]$. 
\end{proposition}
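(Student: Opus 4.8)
The plan is to exploit the one-directional coupling in \eqref{coupled_problem_linearized}: the equation for $p$ involves only the prescribed coefficients $\alpha,r$ and the source $f_1$, so I would solve it first and then feed $p_t$ into the heat equation as a known datum. For the pressure I would build approximate solutions by a Galerkin scheme in the eigenbasis $\{\phi_k\}$ of the Dirichlet Laplacian; since $-\Delta\phi_k=\lambda_k\phi_k$, each $\phi_k$ satisfies both $\phi_k=0$ and $\Delta\phi_k=0$ on $\partial\Omega$, so this basis is adapted to $\Honethree$ and the top-order integrations by parts produce no boundary contributions. The a priori bounds behind \eqref{Main_Energy_Estimate_0} come from four testings: testing with $p_t$ gives the identity for $E_0[p]$ with dissipation $b\|\nabla p_t\|_{L^2}^2$; differentiating the equation in time and testing with $p_{tt}$ gives the $\|\sqrt{\alpha}\,p_{tt}\|_{L^2}^2+\|\sqrt{r}\,\nabla p_t\|_{L^2}^2$ part of $E_1[p]$ with dissipation $b\|\nabla p_{tt}\|_{L^2}^2$; testing with $-\Delta p_t$ gives the $\|\sqrt{r}\,\Delta p\|_{L^2}^2$ part of $E_1[p]$ with dissipation $b\|\Delta p_t\|_{L^2}^2$; and testing with $\Delta^2 p$ (natural in this basis) gives, via $-b\int\Delta p_t\,\Delta^2 p=\tfrac b2\frac{\mathrm d}{\mathrm dt}\|\nabla\Delta p\|_{L^2}^2$, the top energy $E_2[p]$ together with the dissipation $\|\sqrt{r}\,\nabla\Delta p\|_{L^2}^2$ coming from $-\int r\Delta p\,\Delta^2 p$. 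The remaining left-hand quantities in \eqref{Main_Energy_Estimate_0}, namely $\|\Delta p_t\|_{L^2}^2$, $\|\nabla\Delta p_t\|_{L^2}^2$, and $\|p_{ttt}\|_{H^{-1}}^2$, I would obtain not by further testing but algebraically: solving the (differentiated) equation for $b\Delta p_t$, $b\nabla\Delta p_t$, and $\alpha p_{ttt}$ and estimating the right-hand sides by quantities already controlled by $\mathcal{E}[p]$, its dissipation, and the data, using \eqref{Hneg_estimate} for the $H^{-1}$ bound on $p_{ttt}$.

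Each testing produces commutator-type error terms carrying the coefficient derivatives $\alpha_t,r_t,\nabla\alpha,\nabla r$, which are exactly the quantities collected in $\Lambda(t)$ in \eqref{def_Lamba}. The strategy for each is to place the coefficient factor in $L^2$, $L^3$, or $L^4$ as dictated by Assumption~\ref{Assumptions:LinProblem}, keep the top-order pressure factor in $L^2$, and bound the middle factor (such as $p_{tt}$, $\Delta p$, or $\nabla p_t$) in $L^4$ or $L^6$ via $H^1\hookrightarrow L^6$ and Ladyzhenskaya's inequality \eqref{Ladyz_Ineq}, valid for $d\le 3$; Young's inequality in the form \eqref{Ehrlings_ineq} then absorbs the top-order factor into the dissipation with a small constant, leaving $\Lambda(t)$ times a lower-order energy. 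Summing the four resulting differential inequalities and invoking Gronwall's Lemma~\ref{Lemma_Gronwall} with $a(t)=1+\Lambda(t)$ and forcing $\mathbb{F}(t)$ from \eqref{F_Terms} yields \eqref{Main_Energy_Estimate_0} for the approximations; a weak-$*$ compactness argument (the estimate being weakly lower semicontinuous) then produces a solution $p\in X_p$, and uniqueness is immediate, since the difference of two solutions solves the homogeneous problem, for which \eqref{Main_Energy_Estimate_0} forces $\mathcal{E}[p]\equiv 0$.

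For the temperature, the equation is a standard linear parabolic problem with a zeroth-order perfusion term and right-hand side $g=\mathcal{Q}(p_t)+f_2$. Assumption~\ref{Q_assumption}, in the form \eqref{Q_bounds}, gives $g\in H^1(0,T;L^2(\Omega))$ with $\|g\|_{L^2(L^2)}$ and $\|g_t\|_{L^2(L^2)}$ controlled by the advertised products of pressure norms plus $\|f_2\|_{H^1(L^2)}$, all of which are finite because $p\in X_p$ embeds into the spaces occurring on the right-hand side. A Galerkin argument with the tests $\Theta$, $-\Delta\Theta$, and $\Theta_t$ then delivers $\Theta\in X_\Theta$ together with the stated bound, the constant $\rhob\Cb W\Thetaa$ producing the additive $+1$ and the perfusion term $\rhob\Cb W\Theta$ being a harmless lower-order contribution absorbed by Gronwall.

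I expect the main obstacle to be the top-order pressure estimate, specifically the error terms $-\int p_{tt}\,\nabla\alpha\cdot\nabla\Delta p$ and $-\int \Delta p\,\nabla r\cdot\nabla\Delta p$ arising in the $\Delta^2 p$ testing, together with $-\int r_t\,\Delta p\,p_{tt}$ from the $p_{tt}$ testing. These are genuinely critical in $d=3$: the factor $\nabla\Delta p$ must remain inside the dissipation $\|\sqrt{r}\,\nabla\Delta p\|_{L^2}^2$, which forces $\nabla\alpha,\nabla r$ into $L^3,L^4$ and the middle factor into a borderline Lebesgue exponent, so the precise bookkeeping of which norm carries which factor, and the verification that the available regularity $\alpha\in W^{1,3}$, $r\in W^{1,4}$, $r_t\in L^\infty(L^2)$ suffices, is the delicate point. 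Ensuring that the boundary terms vanish under the compatibility conditions built into $\Honethree$ is precisely what legitimizes the $\Delta^2 p$ testing in the first place.
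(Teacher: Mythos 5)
Your proposal is correct and follows essentially the same route as the paper: sequential treatment of the decoupled system, Galerkin approximation in the Dirichlet--Laplacian eigenbasis, the same four testings ($p_t$, $-\Delta p_t$, the time-differentiated equation against $p_{tt}$, and $\Delta^2 p$), the same algebraic bootstrap for $\|\Delta p_t\|_{L^2}$, $\|\nabla\Delta p_t\|_{L^2}$, $\|p_{ttt}\|_{H^{-1}}$, the Ladyzhenskaya--Young splitting to absorb the coefficient-derivative error terms into the dissipation, Gronwall with $a=1+\Lambda$, and a standard parabolic estimate for $\Theta$. The only detail worth adding is in the uniqueness step: for a solution merely in $X_p$ the test function $\Delta^2 p$ is not admissible, so the paper replaces that testing by taking the gradient of the equation and pairing with $\nabla\Delta p\in L^\infty(L^2)$ before concluding $\mathcal{E}[p]\equiv 0$.
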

\begin{proof}
	Since the system is de-coupled, we can analyze the equations in \eqref{coupled_problem_linearized}  sequentially.
\subsection*{Analysis of the pressure equation} The analysis of the pressure equation can be rigorously conducted by employing a Galerkin discretization in space based on the smooth eigenfunctions of the Dirichlet-Laplacian; see. e.g.,~\cite[Ch.\ 7]{evans2010partial}. We focus here on presenting the energy analysis. 
\subsection*{Energy analysis}\label{Energy analysis}\label{Sec_Energy_Analysis}
Testing the (semi-discrete) pressure equation with $p_t$, integrating over $\Omega$, and using integration by parts yields the following identity:
\begin{equation}
\begin{aligned}
\frac12 \ddt \|\sqrt{\alpha(t)} p_t(t)\|^2_{L^2} +\|\sqrt{b}\nabla p_t(t)\|^2_{L^2}=&\,\frac12 (\alpha_t p_t, p_t)_{L^2}+( r  \Delta p, p_t)_{L^2}+(f_1, p_t)_{L^2}
\end{aligned}  
\end{equation}
a.e.\ in time. From here, by H\"older's and Young's inequalities, we have
\begin{equation}
\begin{aligned}
&\frac12 \ddt \|\sqrt{\alpha(t)} p_t(t)\|^2_{L^2}+\|\sqrt{b}\nabla p_t(t)\|^2_{L^2}\\
\lesssim&\, \begin{multlined}[t]\left\Vert \frac{\alpha_t(t)}{b}\right\Vert_{L^2}\|\sqrt{b} p_t(t)\|^2_{L^4}  
+\left\Vert\sqrt{\frac{r(t)}{\alpha(t)}} \right\Vert_{L^\infty}(\|\sqrt{r(t)}\Delta p(t)\|^2_{L^2}+\|\sqrt{\alpha(t)} p_t(t)\|^2_{L^2})\\
+\frac{1}{\sqrt{b}} \Vert f_1(t)\Vert_{L^2}\|\sqrt{b} p_t(t)\|_{L^2}. \end{multlined}
\end{aligned}
\end{equation}
On account of Assumption \ref{Assumptions:LinProblem}, we know that 
\[
\left\Vert\sqrt{{r(t)}/{\alpha(t)}} \right\Vert_{L^\infty} \leq \sqrt{{r_1}/{\alpha_0}} \quad \text{a.e.\ in time},
\]
and thus for any $\varepsilon>0$, it holds that 
\begin{equation}\label{E_0_Estimate}
\begin{aligned}
&\frac12 \ddt \|\sqrt{\alpha(t)} p_t(t)\|^2_{L^2}+\|\sqrt{b}\nabla p_t(t)\|^2_{L^2}\\
\lesssim&\, \begin{multlined}[t]\left\Vert \frac{\alpha_t(t)}{b}\right\Vert_{L^2}\|\sqrt{b} p_t(t)\|^2_{L^4}
+ E_0[p](t)+ E_1[p](t)+\frac{1}{4\varepsilon}\Vert f_1(t)\Vert_{L^2}^2+\varepsilon\|\sqrt{b} \nabla p_t(t)\|_{L^2}^2, \end{multlined}
\end{aligned}
\end{equation}
where we have applied Poincare's inequality together with Young's $\varepsilon$-inequality in the estimate of the last term.  Note that by fixing $\varepsilon>0$ small enough, we can absorb the last term in \eqref{E_0_Estimate} by the dissipative term on the left. \\ 
\indent By  using the embedding $H^1(\Om)\hookrightarrow L^4(\Om)$ together with the Poincar\'e inequality,  the first term on the right-hand side of \eqref{E_0_Estimate} can be absorbed by the dissipative term $\|\sqrt{b}\nabla p_t(t)\|^2_{L^2}$ as well if we assume  the norm $\left\Vert \alpha_t/b\right\Vert_{L^\infty(L^2)}$ to be small. However, to avoid this smallness assumption, we use inequality \eqref{Ehrlings_ineq} instead and split this term into two parts: an energy term and a dissipation term with an arbitrary small factor $\varepsilon>0$. This idea will be used repeatedly in the proof below. Indeed, by using inequality \eqref{Ehrlings_ineq}, we have
\begin{equation}\label{L_4_Term_E_0}
\begin{aligned}
\|\sqrt{b} p_{t}(t)\|^2_{L^4}
\lesssim&\,
C(\varepsilon)\Big\Vert \frac{b}{\alpha(t)}\Big\Vert_{L^\infty} \Vert \sqrt{\alpha} p_{t}(t)\Vert_{L^2}^{2} +\varepsilon \Vert \sqrt{b} \nabla p_{t}(t)\Vert_{L^2}^{2}.
\end{aligned}
\end{equation}
Consequently, by recalling Assumption \ref{Assumptions:LinProblem} and fixing $\varepsilon>0$ small enough, so that 
\[1-C \varepsilon \displaystyle \sup_{t \in (0,T)}\left\Vert \alpha_t(t)/b\right\Vert_{L^2}>0,\] where $C$ is the hidden constant in \eqref{E_0_Estimate}, we obtain 
\begin{comment}
\begin{equation}\label{E_0_Estimate_2_0}
\begin{aligned}
& \ddt E_0[p](t)+\|\sqrt{b}\nabla p_t(t)\|^2_{L^2}\\
\lesssim&\, \begin{multlined}[t]
E_0[p](t)+E_1[p](t)+\left\Vert \frac{\alpha_t(t)}{b}\right\Vert_{L^2}{\Big\Vert \frac{b}{\alpha(t)}\Big\Vert_{L^\infty}}\|{\sqrt{\alpha (t)}}  p_t(t)\|^2_{L^2}
+\Vert f_1(t)\Vert_{L^2}^2.\end{multlined} 
\end{aligned}
\end{equation}
From here we further have
\end{comment}
\begin{equation}\label{E_0_Estimate_2}
\begin{aligned}
&\ddt E_0[p](t)+\|\sqrt{b}\nabla p_t(t)\|^2_{L^2}\\
\lesssim&\,\begin{multlined}[t]  E_0[p](t)+E_1[p](t)+\left\Vert \frac{\alpha_t(t)}{b}\right\Vert_{L^2}\|\sqrt{\alpha(t)}  p_t(t)\|^2_{L^2}
+\Vert f_1(t)\Vert_{L^2}^2,\end{multlined}
\end{aligned}
\end{equation}
where we have also used again the uniform bound on $\alpha$ given in Assumption \ref{Assumptions:LinProblem}.\\
\indent Estimate \eqref{E_0_Estimate_2} indicates that further testing is needed to absorb the energy $E_1$ on the right. Thus, we test the first (semi-discrete) equation in \eqref{coupled_problem_linearized} with $-\Delta p_t$ and integrate in space, which yields 
\begin{equation}\label{Energy_2_Estimate}
\begin{aligned}    
&\frac12 \ddt \|\sqrt{r(t)}\Delta p(t)\|^2_{L^2}+\|\sqrt{b}\Delta p_t(t)\|^2_{L^2}\\
 =&\, (\alpha(t) p_{tt}, \Delta p_t)_{L^2}+\frac12 (r_t(t) \Delta p, \Delta p)-(f_1(t), \Delta p_t)_{L^2}\\
\lesssim&\, \begin{multlined}[t]\frac{1}{4\varepsilon}\|\sqrt{\alpha(t)}p_{tt}(t)\|^2_{L^2}+\varepsilon\left\Vert \sqrt{\frac{\alpha(t)}{b}}\right\Vert_{L^\infty} \|\sqrt{b}\Delta p_t(t)\|^2_{L^2}
+\left\Vert \frac{r_t(t)}{b}\right\Vert_{L^2}\|\sqrt{b}\Delta p(t)\|^2_{L^4}\\
+\frac{1}{b}  \Vert f_1(t)\Vert_{L^2}^2+\varepsilon\|\sqrt{b} \Delta p_t(t)\|_{L^2}^2. \end{multlined}  
\end{aligned}  
\end{equation}
Clearly, by selecting $\varepsilon>0$ small enough in the above estimate, the second term on the right-hand side will be absorbed by the dissipation on the left. Hence, by choosing $\varepsilon>0$ as small as needed, keeping in mind that $\Delta p=0$ on $\partial \Omega$, and using Poincar\'e's inequality,  we obtain  
\begin{equation}\label{Energy_Delta_p}
\begin{aligned}    
 &\frac12 \ddt \|\sqrt{r}\Delta p(t)\|^2_{L^2}+\|\sqrt{b}\Delta p_t(t)\|^2_{L^2}\\  
 \lesssim&\,E_1[p](t)+\left\Vert \frac{r_t(t)}{b}\right\Vert_{L^2}\|\sqrt{b}\nabla\Delta p(t)\|^2_{L^2}
 +\Vert f_1(t)\Vert_{L^2}^2. 
 \end{aligned} 
\end{equation}
To retrieve the energy $E_1$ on the left, we will next work with the time-differentiated pressure equation. Indeed, on account of the regularity assumptions on the coefficients and source term, we can differentiate the semi-discrete pressure equation with respect to $t$:
\begin{equation}\label{Eq:p_ttt}
\alpha(x,t)p_{ttt}-r(x,t)\Delta p_t - b \Delta p_{tt}= \partial_tf_1(x,t)-\alpha_t(x,t)p_{tt}+r_t(x,t)\Delta p.
\end{equation}
Multiplying \eqref{Eq:p_ttt} by $p_{tt}$, integrating over $\Omega$, and using integration by parts with respect to time in the first term, we obtain 
\begin{equation}\label{deriva_Energy_p_tt}
\begin{aligned}
&\frac{1}{2}\ddt\left\{\|\sqrt{\alpha(t)}p_{tt}(t)\|^2_{L^2}+\Vert \sqrt{r}(t)\nabla p_t(t)\Vert_{L^2}^2\right\}+\|\sqrt{b}\nabla p_{tt}(t)\|^2_{L^2}\\
=&\, \begin{multlined}[t]\frac12(\alpha_t p_{tt}, p_{tt})_{L^2}-(\nabla r  p_t, \nabla p_{tt})_{L^2}+\frac{1}{2}(r_t \nabla p_t, \nabla p_t)_{L^2}+\langle\partial_t f_{1}, p_{tt}\rangle_{H^{-1}, H^1}\\-(\alpha_t p_{tt},p_{tt})_{L^2}+(r_t \Delta p, p_{tt})_{L^2}. \end{multlined}
\end{aligned}
\end{equation}
The first two $r$ terms on the right can be estimated as follows:
\begin{equation}\label{New_Estimate_E_1_1}
\begin{aligned}
&-(\nabla r  p_t, \nabla p_{tt})_{L^2}+\frac{1}{2}(r_t \nabla p_t, \nabla p_t)_{L^2} \\
\leq&\,\begin{multlined}[t]\varepsilon\|\sqrt{b}\nabla p_{tt}(t)\|^2_{L^2}+C(\varepsilon)\left\Vert \frac{1}{\sqrt{r}}\right\Vert_{L^\infty} \Vert \nabla r\Vert_{L^4}\Vert \sqrt{r}\nabla p_t\Vert_{L^2} 
+\frac12(r_t \nabla p_t, \nabla p_t)_{L^2}\end{multlined}
 \end{aligned}
\end{equation}
for some $\varepsilon>0$, where we have relied on the embedding $H^1(\Omega)\hookrightarrow L^4 (\Omega)$. By applying estimate \eqref{Ehrlings_ineq}, we can further bound the last term: 
\begin{equation}\label{Last_Term_E_1} 
\begin{aligned}
\frac12 (r_t \nabla p_t, \nabla p_t)_{L^2} \lesssim&\,  \Vert r_t\Vert_{L^2}\Vert \nabla p_t\Vert_{L^4}^2\\
\lesssim&\,  \,
C(\varepsilon)\Vert r_t\Vert_{L^2}^2\| r^{-1}\|_{L^\infty}\Vert \sqrt{r}\nabla p_t\Vert_{L^2}^{2} +\varepsilon \Vert \Delta  p_t\Vert_{L^2}^{2},
 \end{aligned}
\end{equation}
where we have also utilized elliptic regularity (since $\partial \Omega$ is smooth):
\[
\|\nabla p_t\|_{H^1} \leq \|p_t\|_{H^2} \leq C \|\Delta p_t\|_{L^2}.
\]
The first and the fifth term on the right-hand side of \eqref{deriva_Energy_p_tt} can be estimated as follows:  
\begin{equation}\label{Energ_3_First_term}
\begin{aligned}
\frac12(\alpha_t p_{tt}, p_{tt})_{L^2}-(\alpha_t p_{tt},p_{tt})_{L^2}=\,-\frac12(\alpha_t p_{tt},p_{tt})_{L^2} 
\lesssim&\,\left\Vert \frac{\alpha_t(t)}{b}\right\Vert_{L^2}\|\sqrt{b} p_{tt}(t)\|^2_{L^4}.
\end{aligned}
\end{equation}
We then further estimate the last term above using again inequality \eqref{Ehrlings_ineq}:
\begin{equation}\label{Ladyz_Estimate_1}
\begin{aligned}  
\|\sqrt{b} p_{tt}(t)\|^2_{L^4} \lesssim&\,
C(\varepsilon)\Big\Vert \frac{b}{\alpha(t)}\Big\Vert_{L^\infty} \Vert \sqrt{\alpha(t)} p_{tt}(t)\Vert_{L^2}^{2} +\varepsilon \Vert \sqrt{b} \nabla p_{tt}(t)\Vert_{L^2}^{2}.
\end{aligned}
\end{equation}
Keeping in mind Assumption \eqref{Assumptions:LinProblem}, and plugging \eqref{Ladyz_Estimate_1} into \eqref{Energ_3_First_term}, we have 
\begin{equation}\label{Energ_3_First_term_2}
\begin{aligned}  
&-\frac12(\alpha_t p_{tt}, p_{tt})_{L^2}
\lesssim\,\left\Vert \frac{\alpha_t(t)}{b}\right\Vert_{L^2}\Big(\varepsilon\Vert \sqrt{b}\nabla  p_{tt}(t)\Vert_{L^2}^{2}+C(\varepsilon) \Vert \sqrt{\alpha} p_{tt}(t)\Vert_{L^2}^{2}\Big).
\end{aligned}
\end{equation}
By using Young's inequality together with the Poincar\'e's inequality, we find that
\begin{equation}\label{Energ_pttt}
\begin{aligned}
\langle \partial_t f_{1}(t), p_{tt}(t)\rangle_{H^{-1}, H^{1}}\lesssim &\,\frac{1}{\sqrt{b}} \Vert \partial_tf_1(t)\Vert_{H^{-1}}\|\sqrt{b}  p_{tt}(t)\|_{H^{1}}\\
\lesssim&\, 4\varepsilon \frac{1}{b} \Vert \partial_tf_1(t)\Vert_{H^{-1}}^2+\varepsilon\|\sqrt{b}\nabla   p_{tt}(t)\|_{L^2}^2.
\end{aligned}
\end{equation}
Recalling that $\Delta p=0$ on $\partial \Omega$, we can estimate the last term on the right-hand side of \eqref{deriva_Energy_p_tt} as follows:
\begin{equation}\label{Last_Term_E_1}
\begin{aligned}
(r_t \Delta p, p_{tt})_{L^2}\lesssim&\, \varepsilon \Vert \sqrt{b}p_{tt}\Vert_{L^4}^2+C(\varepsilon)\left\Vert  \frac{r_t}{b}\right\Vert_{L^2}^2\Vert \sqrt{b}\Delta p\Vert_{L^4}^2   \\
\lesssim&\, \varepsilon \Vert \sqrt{b}\nabla p_{tt}\Vert_{L^2}^2+C(\varepsilon)\left\Vert  \frac{r_t}{b}\right\Vert_{L^2}^2\Vert \sqrt{b}\nabla\Delta p\Vert_{L^2}^2.
\end{aligned}  
\end{equation} 
We see that the first term on the right can be absorbed by the dissipation in \eqref{deriva_Energy_p_tt} and the last one is an energy term. By collecting the above estimates with  $\varepsilon>0$ small enough, we arrive at
\begin{equation}\label{deriva_Energy_p_tt_2}
\begin{aligned}
&\frac{1}{2}\ddt\Big(\|\sqrt{\alpha}p_{tt}(t)\|^2_{L^2}+\Vert \sqrt{r}(t)\nabla p_t(t)\Vert_{L^2}^2\Big)+\|\sqrt{b}\nabla p_{tt}(t)\|^2_{L^2}\\
\lesssim &\, \begin{multlined}[t]\left\Vert  \frac{r_t}{b}\right\Vert_{L^2}^2\Vert \sqrt{b}\nabla\Delta p\Vert_{L^2}^2+(\Vert \nabla r\Vert_{L^4}+\Vert r_t\Vert_{L^2}^2)\Vert \sqrt{r}\nabla p_t\Vert_{L^2}\\+\Vert \partial_tf_1(t)\Vert_{H^{-1}}^2
+\varepsilon \Vert \Delta  p_t\Vert_{L^2}^{2}.
\end{multlined}
\end{aligned}
\end{equation}
Adding \eqref{deriva_Energy_p_tt_2} to \eqref{Energy_Delta_p}, exploiting Assumption \ref{Assumptions:LinProblem}, using Poincar\'e's inequality, and possibly reducing $\varepsilon$, so that the $\varepsilon$ terms can be absorbed by the left side, we obtain  
\begin{equation}\label{Energy_Delta_p_p_tt}
\begin{aligned}  
 &\begin{multlined}[t] \ddt \underbrace{\frac12\Big[\|\sqrt{r(t)}\Delta p(t)\|^2_{L^2}+\|\sqrt{\alpha(t)}p_{tt}(t)\|^2_{L^2}{+\Vert \sqrt{r}(t)\nabla p_t(t)\Vert_{L^2}^2}\Big]}_{:=E_1[p](t)}\\+\|\sqrt{b}\nabla p_{tt}(t)\|^2_{L^2}+\|\sqrt{b}\Delta p_t(t)\|^2_{L^2}\end{multlined}\\  
 \lesssim &\,  \begin{multlined}[t] \left(1+\Vert \nabla r\Vert_{L^4}+\Vert r_t\Vert_{L^2}^2\right)E_1[p](t)+\left\Vert \frac{r_t(t)}{b}\right\Vert_{L^2}^2\|\sqrt{b} \nabla \Delta p(t)\|^2_{L^2}
 +\Vert f_1(t)\Vert_{L^2}^2\\
 +\Vert \partial_tf_1(t)\Vert_{H^{-1}}^2.
\end{multlined}
 \end{aligned} 
\end{equation}
To be able to absorb the term $\|\sqrt{b} \nabla \Delta p(t)\|^2_{L^2}$ on the right, we should additionally test the pressure equation with $\Delta^2 p$:
\[
(\alpha(t)p_{tt}-r(t)\Delta p - b \Delta p_t, \Delta^2 p)_{L^2}= (f_1(t), \Delta^2 p)_{L^2}. 
\]
Integrating  by parts and using the fact that $p_{tt}=\Delta p= \Delta p_t=0$ on the boundary for smooth Galerkin approximations, as well as that $f_1(t) \in H_0^1(\Om)$, yields
\[
(r \nabla \Delta p +b \nabla \Delta p_t, \nabla \Delta p)_{L^2}= -(\alpha \nabla p_{tt}+p_{tt}\nabla \alpha+ \nabla r\Delta p, \nabla \Delta p)_{L^2}+(\nabla f_1, \nabla \Delta p)_{L^2}.
\]
Recalling how the energy $E_2$ is defined in \eqref{acoustic_energies}, from here we obtain 
\begin{equation}
\begin{aligned}
&\,\ddt E_2[p](t)+\|\sqrt{r}(t)\nabla \Delta p(t)\|^2_{L^2}\\
=&\,-(\alpha \nabla p_{tt}+p_{tt}\nabla \alpha+ \nabla r\Delta p, \nabla \Delta p)_{L^2}+(\nabla f_1(x,t), \nabla \Delta p)_{L^2}.
\end{aligned}
\end{equation}
By H\"older's inequality, we further have
\[
\begin{aligned}
& \ddt E_2[p](t)+\|\sqrt{r}(t)\nabla \Delta p(t)\|^2_{L^2}\\
\lesssim&\,\begin{multlined}[t] \|\alpha(t)\|_{L^\infty}\|\nabla p_{tt}(t)\|_{L^2}\|\nabla \Delta p(t)\|_{L^2}+{\|p_{tt}(t)\|_{L^6}\|\nabla \alpha(t)\|_{L^3}\|\nabla \Delta p(t)\|_{L^2}}\\+\|\nabla r(t)\|_{L^4}\|\Delta p(t)\|_{L^4}\|\nabla \D p(t)\|_{L^2}+\frac{1}{4\varepsilon}\|\nabla f_1(t)\|_{L^2}^2+\varepsilon\Vert r(t)^{-1}\Vert_{L^\infty} \|\sqrt{r(t)}\nabla \Delta p(t)\|_{L^2}. \end{multlined}
\end{aligned}
\]
Using Young's and Poincar\'e's inequalities, and the embedding $H^1(\Omega) \hookrightarrow L^6(\Omega)$ yields
\begin{equation}\label{Eq_E_2_dt}
\begin{aligned}
& \ddt E_2[p](t)+\|\sqrt{r}(t)\nabla \Delta p(t)\|^2_{L^2}\\
\lesssim&\,\begin{multlined}[t] \frac{\varepsilon}{b}\|\alpha(t)\|^2_{L^\infty}\|\sqrt{b}\nabla p_{tt}(t)\|_{L^2}^2+\frac{1}{4 \varepsilon b}\|\sqrt{b}\nabla \Delta p(t)\|^2_{L^2}
+\varepsilon\|\nabla p_{tt}(t)\|^2_{L^2}\|\nabla \alpha(t)\|^2_{L^3}\\
+\|\sqrt{b}\nabla \Delta p(t)\|^2_{L^2}
+\|\nabla r(t)\|_{L^4}\|\sqrt{b}\nabla \D p(t)\|^2_{L^2}  
+\|\nabla f_1(t)\|_{L^2}^2. \end{multlined}
\end{aligned}
\end{equation}
By adding inequalities \eqref{Energy_Delta_p_p_tt} and \eqref{Eq_E_2_dt}, and selecting $\varepsilon>0$ small enough,  we have
\begin{equation}\label{Energy_Delta_p_Delta_p_t}
\begin{aligned}  
 & \begin{multlined}[t] \ddt \left\{E_1[p](t)+E_2[p](t)\right\}
 +\|\sqrt{b}\nabla p_{tt}(t)\|^2_{L^2}+\|\sqrt{b}\Delta p_t(t)\|^2_{L^2}\\+\|\sqrt{r(t)}\nabla \Delta p(t)\|^2_{L^2}\end{multlined} \\  
\lesssim&\,  \begin{multlined}[t]\left(1+\Vert \nabla r(t)\Vert_{L^4}+\Vert r_t\Vert_{L^2}^2\right)\left\{E_1[p](t)+E_2[p](t)\right\}
+\Vert \partial_tf_1(t)\Vert_{H^{-1}}^2+\|f_1(t)\|_{H^1}^2. \end{multlined}
 \end{aligned} 
\end{equation}
By collecting the above estimates, we arrive at a bound that involves the combined acoustic energy: 
\begin{equation}\label{E_p_cal}
\begin{aligned}
 \ddt\mathcal{E}[p](t)+\mathcal{D}[p](t)\lesssim (1+\Lambda(t))\mathcal{E}[p](t)+\mathbb{F}(t), 
\end{aligned} 
\end{equation} 
where $\Lambda(t)$ and $\mathbb{F}(t)$ are defined in \eqref{def_Lamba} and \eqref{F_Terms}, respectively. By Gronwall's inequality, we then immediately have
\begin{equation}\label{E_p_cal_Main}
\begin{aligned}
&\mathcal{E}[p](t) +\int_0^t\mathcal{D}[p](s)\ds \\
\lesssim &\, \mathcal{E}[p](0)\exp{\left(\int_0^t (1+\Lambda(s))\ds\right)}+\int_0^t\exp{\left(\int_s^t (1+\Lambda(\sigma))\textup{d}\sigma\right)}\mathbb{F}(s)\ds.
\end{aligned} 
\end{equation}
\noindent\emph{Additional bootstrap arguments.} We can obtain more information on the pressure field by relying on the (semi-discrete) PDE. Indeed, by the acoustic PDE we have
\begin{equation}\label{Delta_p_t}
\begin{aligned}
\|\Delta p_t(t)\|^2_{L^2} \lesssim \alpha_1^2 \|p_{tt}(t)\|^2_{L^2}+r_1^2\|\Delta p(t)\|_{L^2}^{{2}}
+\|f_1(t)\|_{L^2}^2. 
\end{aligned}
\end{equation} 
 We can then further estimate the right-hand side of \eqref{Delta_p_t} by employing the acoustic energy:
	\begin{equation}
	\begin{aligned}
	\|\Delta p_t(t)\|^2_{L^2} \lesssim&\, \mathcal{E}[p](t)
	+\|f_1(t)\|_{L^2}^2 \\
	\lesssim&\, \mathcal{E}[p](0)\exp{\left(\int_0^t (1+\Lambda(s))\ds\right)}+\int_0^t\exp{\left(\int_s^t (1+\Lambda(\sigma))\textup{d}\sigma\right)}\mathbb{F}(s)\ds,
	\end{aligned}
	\end{equation} 
where we have also used estimate \eqref{est_f1_embed} to bound the $\|f_1(t)\|_{L^2}^2$ term. Adding this bound to \eqref{E_p_cal_Main} yields
\begin{equation}\label{E_p_cal_Main_}
\begin{aligned}
&\mathcal{E}[p](t)+\|\Delta p_t(t)\|^2_{L^2} +\int_0^t\mathcal{D}[p](s)\ds\\
\lesssim&\, \mathcal{E}[p](0)\exp{\left(\int_0^t (1+\Lambda(s))\ds\right)}+\int_0^t\exp{\left(\int_s^t (1+\Lambda(\sigma))\textup{d}\sigma\right)}\mathbb{F}(s)\ds.
\end{aligned} 
\end{equation}
Similarly,
	\begin{equation} \label{higher_pt}
\begin{aligned}
\|\sqrt{b}\nabla \Delta p_t(t)\|^2_{L^2} \lesssim&\, \alpha_1^2 \|\nabla p_{tt}(t)\|_{L^2}^2+\|\nabla \alpha(t)\|_{L^3}\|p_{tt}(t)\|_{L^6}\\
&+r_1\|\sqrt{r}\nabla \Delta p(t)\|^2_{L^2}+\|\nabla r(t)\|^2_{L^4}\|\Delta p(t)\|^2_{L^4}
+\|\nabla f_1(t)\|_{L^2}^2. 
\end{aligned}
\end{equation} 
Adding $\gamma\cdot$\eqref{higher_pt} to \eqref{E_p_cal} with small enough $\gamma>0$ yields 
\begin{equation}
\begin{aligned}
\ddt\mathcal{E}[p](t)+\mathcal{D}[p](t)+\|\sqrt{b}\nabla \Delta p_t(t)\|^2_{L^2}\lesssim (1+\Lambda(t))\mathcal{E}[p](t)+\mathbb{F}(t),
\end{aligned} 
\end{equation}
on which we can apply Gronwall's inequality.\\
\indent Additionally, from the time-differentiated equation \eqref{Eq:p_ttt}, standard arguments (see, e.g.,~\cite[Ch.\ 7, p.\ 383]{evans2010partial}) give the following bound in the dual space $H^{-1}(\Omega)$:
\begin{equation}\label{p_ttt_1}  
\begin{aligned}
&\|\partial_t(\alpha(t) p_{tt})(t)\|_{H^{-1}}\\
 \leq&\, \|r(t)\Delta p_t(t)\|_{H^{-1}}+\|r_t(t)\Delta p(t)\|_{H^{-1}}+\|b\Delta p_{tt}(t)\|_{H^{-1}}+\|\partial_t f_1(t)\|_{H^{-1}} \\  
\lesssim&\, \begin{multlined}[t]\|r(t)\|_{L^\infty}\|\Delta p_t(t)\|_{L^2}+\|r_t(t)\|_{L^2}\|\nabla \Delta p(t)\|_{L^2}+\|\nabla p_{tt}(t)\|_{L^2}+\|\partial_t f_1(t)\|_{H^{-1}}, \end{multlined}
\end{aligned}
\end{equation}
where we have used the embedding $L^{6/5}(\Omega) \hookrightarrow H^{-1}(\Omega) $ together with H\"older's inequality to get \[\|r_t \Delta p\|_{H^{-1}} \lesssim \|r_t \Delta p\|_{L^{6/5}} \lesssim \|r_t\|_{L^2}\|\Delta p\|_{L^{3}} \lesssim \|r_t\|_{L^2}\|\nabla \Delta p\|_{L^2}.\]  
 Thus, we have
\[
p_{tt} \in L^2(0,T; H_0^1(\Omega)), \quad \partial_t (\alpha(\cdot) p_{tt}) \in L^2(0,T; H^{-1}(\Omega))
\]   
with a uniform bound
\begin{equation}\label{p_ttt_L^2}
\begin{aligned}    
	\|p_{ttt}\|_{H^{-1}} \lesssim&\, \|\alpha p_{ttt}\|_{H^{-1}}\left( \left\|\alpha^{-1}\right\|_{L^\infty}+\left\|\nabla (\alpha^{-1})\right\|_{L^3}\right)\\
	\lesssim&\, (\|\partial_t(\alpha p_{tt})\|_{H^{-1}}+\|\alpha_t p_{tt}\|_{H^{-1}})\left( \alpha_1^{-1}+\alpha_1^{-2}\left\|\nabla \alpha\right\|_{L^3}\right).
\end{aligned}   
\end{equation}
By using again the embedding $L^{6/5}(\Omega) \hookrightarrow H^{-1}(\Omega)$ and H\"older's inequality, we have, similarly to before,
 \begin{equation}\label{p_tt_H_1}
\|\alpha_t p_{tt}\|_{H^{-1}}\lesssim  \|\alpha_t p_{tt}\|_{L^{6/5}}\lesssim\Vert \alpha_t\Vert_{L^3}\Vert p_{tt}\Vert_{L^2},
\end{equation}
and thus
\begin{equation}\label{p_ttt_L^2_3}
\begin{aligned}    
\|p_{ttt}\|_{H^{-1}}^2 
\lesssim&\,\begin{multlined}[t] (1+\|\nabla \alpha\|_{L^3}^2)\left(\|r\|_{L^\infty}^2\|\Delta p_t\|_{L^2}^2 
+\|r_t\|_{L^2}^2\|r^{-1}\|_{L^\infty} \|\sqrt{r}\nabla \Delta p\|_{L^2}^2\right.\\+\|\nabla p_{tt}\|_{L^2}^2\left.
+\|\partial_t f_1\|_{H^{-1}}^2+\Vert \alpha_t\Vert_{L^3}^2\Vert p_{tt}\Vert_{L^2}^2\right).
\end{multlined}
\end{aligned}
\end{equation}
Then adding $\gamma \cdot$\eqref{p_ttt_L^2_3} to \eqref{E_p_cal} with $\gamma>0$ small enough, and  using Gronwall's inequality yields 
\begin{equation}\label{E_p_cal_Main_2}
\begin{aligned}
&\mathcal{E}[p](t)+\|p_{ttt}\|_{L^2(H^{-1})}^2 +\int_0^t\mathcal{D}[p](s)\ds\\
\lesssim&\, \mathcal{E}[p](0)\exp{\left(\int_0^t (1+\Lambda(s))\ds\right)}+\int_0^t\exp{\left(\int_s^t (1+\Lambda(\sigma))\textup{d}\sigma\right)}\mathbb{F}(s)\ds. 
\end{aligned} 
\end{equation}
Combining the three derived estimates yields \eqref{Main_Energy_Estimate_0}, as first in a semi-discrete setting. The obtained uniform bound allows us to employ standard compactness arguments and prove existence of a solution $p \in X_p$ to the pressure equation; see, e.g.,~\cite[Ch.\ 7]{evans2010partial} for similar arguments. By the weak/weak-$\star$ lower semi-continuity of norms, $p$ satisfies the same energy bound   \eqref{Main_Energy_Estimate_0}. Note that $p \in X_p$ implies
\begin{equation}
\begin{aligned}
p \in C([0,T]; \Honethree), \quad p_t \in  C_w([0,T]; \Honetwo); %\quad p_{tt} \in  C_w([0,T]; H_0^1(\Omega));
\end{aligned}
\end{equation}
cf. \cite[Lemma 3.3]{temam2012infinite}.\\[2mm]  
\noindent \emph{Uniqueness.} Uniqueness in the pressure equation follows by showing that the only solution of the homogeneous problem is zero. To this end,
let ${p} \in X_p$ solve
\begin{equation}\label{Eq_Difference}
\alpha(x,t){p}_{tt}-r(x,t)\Delta {p} - b \Delta {p}_t=0,\qquad {p}(x,0)={p}_t(x,0)=0,\qquad {p}\vert_{\partial \Om}=0. 
\end{equation}
We can repeat our previous energy analysis up to \eqref{E_p_cal}, where instead of testing with $\Delta^2 p$ (which is not a valid test function), we take the gradient of the equation and test with $\nabla \Delta p \in L^\infty(L^2(\Omega))$. In this manner, from \eqref{Main_Energy_Estimate_0} we obtain  $\mathcal{E}[{p}](t)=0$, which immediately yields ${p}=0$.

\subsection*{Analysis of the heat equation.} 
We next rewrite the heat equation as
\[
\Theta_t -\frac{\kappaa}{\rhoa \Ca}\Delta \Theta+ \frac{\rhob \Cb W}{\rhoa \Ca}\Theta =\tilde{f}
\]
with
\[
\tilde{f}=\frac{1}{\rhoa \Ca} \mathcal{Q}(p_t)+\frac{1}{\rhoa \Ca}f_2(x,t)+\frac{\rhob \Cb W \Thetaa}{\rhoa \Ca}.
\]
According to, e.g.,~\cite[Ch.\ 1, Theorem 1.3.2]{zheng2004nonlinear}, the unique solution $\Theta \in X_{\Theta}$ of this problem satisfies
\begin{equation} \label{Heat_bound}
\begin{aligned}
&\|\Theta(t)\|_{\Honetwo}^2+\|\Theta_t(t)\|^2_{L^2}+\int_0^t (\|\Theta_{tt}\|^2_{H^{-1}}+\|\Theta_t\|^2_{H^1})\ds\\
\leq&\, C_T (\|\Theta_0\|_{\Honetwo}^2+\|\tilde{f}(0)\|^2_{L^2}+\int_0^t \|\tilde{f}_t\|^2_{L^2}\ds)
\end{aligned}
\end{equation}
for all $t \in [0,T]$; see also~\cite[Ch.\ 2, Theorem 3.2]{temam2012infinite}. Thanks to the assumed properties of the mapping $\mathcal{Q}$, we have
\[
\begin{aligned}
\|\tilde{f}\|_{L^2(L^2)} \lesssim \|f_2\|_{L^2(L^2)}+  \|p_t\|_{L^\infty(L^\infty)}\|p_t\|_{L^2(L^2)}+C(T, \Omega, \Thetaa).
\end{aligned}
\]
Further,
\[
\begin{aligned}
\|\tilde{f}_t\|_{L^2(L^2)} \lesssim&\, \|\partial_t f_{2}\|_{L^2(L^2)}+ \|p_t\|_{L^2(L^\infty)}\| p_{tt}\|_{L^\infty(L^2)}.
\end{aligned}
\]
Thus, by the embedding $H^1(0,T) \hookrightarrow C[0,T]$, from \eqref{Heat_bound} we have
\begin{equation}\label{Theta_Energy_Estimate}
\begin{aligned}
&\|\Theta(t)\|_{\Honetwo}^2+\|\Theta_t(t)\|^2_{L^2}+\int_0^t (\|\Theta_{tt}\|^2_{H^{-1}}+\|\Theta_t\|^2_{H^1})\ds\\
\leq&\, C_T \big(\|\Theta_0\|_{H^2}^2+\|f_2\|^2_{H^1(L^2)}+ \|p_t\|^2_{L^\infty(L^\infty)}\|p_t\|^2_{L^2(L^2)}+ \|p_t\|_{L^2(L^\infty)}^2\| p_{tt}\|^2_{L^\infty(L^2)}+1\big),
\end{aligned}  
\end{equation} 
as claimed. This finishes the proof of Proposition \ref{Prop:LinWellp}.
\end{proof}
\section{Local well-posedness of the nonlinear problem }  \label{Sec:NonLinProblem}
To prove local well-posedness of the coupled Westervelt--Pennes model, we intend to rely on Banach's fixed
point theorem. To this end, let us introduce the fixed-point mapping $\mathcal{T}: (p^*, \Theta^*) \mapsto (p, \Theta)$, which associates 
\[
(p_*, \Theta_*) \in B \subset X_T:=X_p \times X_\Theta,
\]  
where $B$ will be a suitably chosen ball in $X_T$, with the solution $(p, \Theta) \in X_p \times X_{\Theta}$ of
\begin{equation} \label{System_Fixed_Point}
\left\{ \begin{aligned}
&(1-2k(\Theta_*)p_*)p_{tt}-q(\Theta_*)\Delta p - b \Delta p_t = 2k(\Theta_*)p_{*t}^2, \qquad &&\text{in} \ \Omega \times (0,T), \\[1mm] 
& 	\rhoa \Ca\Theta_t -\kappaa\Delta \Theta+ \rhob \Cb W(\Theta-\Thetaa) = \mathcal{Q}(p_t), \qquad &&\text{in} \ \Omega \times (0,T),
\end{aligned} \right.
\end{equation}
with the boundary \eqref{coupled_problem_BC} and initial \eqref{coupled_problem_IC} conditions. Our main results reads as follows.
\begin{theorem}\label{Thm:NlWellP}
Let $T>0$ and 
\begin{equation}
(p_0, p_1) \in \Honethree \times \Honetwo, \quad \Theta_0 \in \Honetwo.  
\end{equation}
There exists $\delta=\delta(T)>0$, such that if
\begin{equation}
\mathcal{E}[p](0) \leq \delta,
\end{equation}
then there exist a unique solution $(p, \Theta)$ of \eqref{coupled_problem} in $X_T$. Furthermore, the solution depends continuously on the data with respect to $\|\cdot\|_{X_T}$.
\end{theorem}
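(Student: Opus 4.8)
The plan is to obtain $(p,\Theta)$ as a fixed point of the map $\mathcal{T}$ defined through \eqref{System_Fixed_Point} and to invoke Banach's theorem on a suitable ball $B \subset X_T$. Matching \eqref{System_Fixed_Point} with the linear template \eqref{coupled_problem_linearized}, I first identify the variable coefficients and data
\[
\alpha = 1-2k(\Theta_*)p_*, \qquad r=q(\Theta_*), \qquad f_1 = 2k(\Theta_*)p_{*t}^2, \qquad f_2=0,
\]
so that each evaluation of $\mathcal{T}$ reduces to one application of Proposition~\ref{Prop:LinWellp}. I would fix a small radius $R_p$ and an $O(1)$ radius $R_\Theta$ and work in
\[
B=\{(p_*,\Theta_*)\in X_T:\ \|p_*\|_{X_p}\le R_p,\ \|\Theta_*\|_{X_\Theta}\le R_\Theta,\ (p_*,p_{*t},\Theta_*)|_{t=0}=(p_0,p_1,\Theta_0)\}.
\]

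The first task is to verify, for $(p_*,\Theta_*)\in B$, that these coefficients satisfy Assumption~\ref{Assumptions:LinProblem}. The non-degeneracy $\alpha\ge\alpha_0>0$ is precisely where the smallness of the pressure data enters: using \eqref{Assumption_k_1} and the embedding $\Honetwo\hookrightarrow L^\infty(\Omega)$ one bounds $\|2k(\Theta_*)p_*\|_{L^\infty(L^\infty)}\lesssim q_0^{-1}\|p_*\|_{L^\infty(H^2)}\lesssim q_0^{-1}R_p$, so choosing $R_p$ small forces $\alpha\ge\tfrac12$; the lower bound $r\ge q_0$ is immediate from Assumption~\ref{Assumption_1}. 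The remaining spatial and temporal regularity of $\alpha$, $r$ and of $f_1$ follows by chain- and product-rule differentiation combined with the growth bounds \eqref{q_prime_Assumption}, \eqref{Assumption_k_2} and Sobolev embeddings; for instance $\alpha_t=-2(k'(\Theta_*)\Theta_{*t}p_*+k(\Theta_*)p_{*t})$ lies in $L^2(L^3)\cap L^\infty(L^2)$, while the quadratic source obeys $\|f_1\|_{L^\infty(H^1)}+\|\partial_t f_1\|_{L^2(H^{-1})}\lesssim C(R_\Theta)R_p^2$ once one estimates $\|p_{*t}^2\|_{H^1}\lesssim\|p_{*t}\|_{H^2}^2$ and $\|p_{*t}p_{*tt}\|_{H^{-1}}\lesssim\|p_{*t}\|_{L^3}\|p_{*tt}\|_{L^2}$.

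For the self-mapping property I would feed these bounds into \eqref{Main_Energy_Estimate_0}. Each summand of $\Lambda$ in \eqref{def_Lamba} is integrable in time and bounded on $B$, so $\exp(\int_0^t(1+\Lambda))\le C(R_p,R_\Theta)$ for $T\le 1$, whereas the quadratic structure of $f_1$ gives $\int_0^T\mathbb{F}\lesssim T\,C(R_\Theta)R_p^4$. Hence $\mathcal{E}[p](t)\lesssim\mathcal{E}[p](0)\,C(R_\Theta)+T\,C(R_\Theta)R_p^4$; setting $R_p^2\sim\delta$ and then taking $T$ small closes $\|p\|_{X_p}\le R_p$ (smallness of $\mathcal{E}[p](0)\le\delta$ also guarantees $\|p_0\|_{L^\infty}$ small, hence non-degeneracy at $t=0$). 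For the temperature, the bound \eqref{Theta_Energy_Estimate} with $f_2=0$ yields $\|\Theta\|_{X_\Theta}^2\lesssim\|\Theta_0\|_{\Honetwo}^2+C(R_p)+1$, fixing a finite $R_\Theta$; note the additive constant originating from the ambient term $\rhob\Cb W\Thetaa$ keeps $\Theta$ of size $O(1)$, so $R_\Theta$ cannot be taken small and is chosen independently of $R_p$. The contraction I would establish in a weaker metric: subtracting the two systems, the differences $\overline{p}=p^{(1)}-p^{(2)}$ and $\overline{\Theta}=\Theta^{(1)}-\Theta^{(2)}$ solve linear problems with vanishing initial data whose right-hand sides collect coefficient and source differences, controlled by $\overline{p_*}$, $\overline{\Theta_*}$ via the Lipschitz continuity of $k,q$ on the bounded range set by $B$ and the Lipschitz property \eqref{Lipschitz_Assumption} of $\mathcal{Q}$. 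A first-order energy estimate for $\overline{p}$ (testing with $\overline{p}_t$ to control $E_0[\overline{p}]$) together with the parabolic estimate for $\overline{\Theta}$, coupled through $\|\mathcal{Q}(p^{(1)}_t)-\mathcal{Q}(p^{(2)}_t)\|_{L^2(L^2)}\lesssim\|\overline{p}_t\|_{L^2(L^2)}$ and Hölder splittings using $H^1\hookrightarrow L^6$, delivers a contraction constant $\lesssim\sqrt{T}\,C(R_p,R_\Theta)$, which reducing $T$ makes $<1$. Since $B$ is bounded in $X_T$ and closed under the weak-$\star$ limits that occur in this weaker topology, it is complete therein, so Banach's theorem yields the unique fixed point $(p,\Theta)\in B\subseteq X_T$; the same difference estimates give continuous dependence on the data.

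The \emph{main obstacle} lies in the self-mapping step. The non-degeneracy constraint forces $p$ to be small, yet the very source $f_1=2k(\Theta_*)p_{*t}^2$ driving the pressure equation is quadratic; one must verify that its smallness of order $R_p^2$, multiplied by the time factor from $\int_0^T\mathbb{F}$, genuinely reabsorbs into the radius $R_p$ rather than inflating it, all while the temperature stays of order one and thus injects $O(1)$ coefficients into $\Lambda$ and the exponential factor of \eqref{Main_Energy_Estimate_0}. The most technical book-keeping is the verification of Assumption~\ref{Assumptions:LinProblem} for $\alpha_t$ and $\partial_t f_1$ — in particular the $H^{-1}$ time-derivative of a quadratic term — where the polynomial growth bounds \eqref{Assumption_k_2} and the regularity encoded in $X_p\times X_\Theta$ must be matched against the precise integrability demanded by $(\mathbf{A})$, $(\mathbf{R})$, $(\mathbf{F})$.
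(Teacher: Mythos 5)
Your overall architecture coincides with the paper's: the same identification $\alpha=1-2k(\Theta_*)p_*$, $r=q(\Theta_*)$, $f_1=2k(\Theta_*)p_{*t}^2$, $f_2=0$, the same two-radius ball with a small pressure radius and an $O(1)$ temperature radius, and a Banach fixed-point argument built on Proposition~\ref{Prop:LinWellp}. However, there is a genuine mismatch with the statement you are asked to prove. Theorem~\ref{Thm:NlWellP} fixes an \emph{arbitrary} $T>0$ in advance and asserts existence on all of $[0,T]$ provided $\mathcal{E}[p](0)\leq\delta(T)$. Your closing mechanism, both for the self-map (``setting $R_p^2\sim\delta$ and then taking $T$ small'') and for the contraction (``contraction constant $\lesssim\sqrt{T}\,C(R_p,R_\Theta)$, which reducing $T$ makes $<1$''), shrinks the time horizon. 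That proves a short-time result, not the stated one, and it cannot be upgraded by naive continuation because nothing guarantees the pressure data remain small at the end of each short interval. The paper closes both steps for \emph{fixed} $T$ by exploiting the powers of $R_1$: the exponential factor $\exp(\int_0^T(1+\Lambda))$ is merely a finite constant $C(T,R_1,R_2)$, the forcing contribution $\int_0^T\mathbb{F}$ is $O(R_1^4)$ (strictly superquadratic, hence reabsorbed into $R_1^2$ by shrinking $R_1$ alone — your own bound $\int_0^T\mathbb{F}\lesssim T\,C(R_\Theta)R_p^4$ already shows this, so the appeal to small $T$ there is unnecessary), and every term of $\overline{f}_1$ in the contraction estimate carries at least one factor of a pressure norm, yielding a Lipschitz constant $e^{C(T,R_1,R_2)}(R_1+R_1^2)C(T,R_2)$ that is made $<1$ by decreasing $R_1$, not $T$. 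You should extract that explicit factor of $R_p$ from the coefficient and source differences instead of a factor of $\sqrt{T}$.

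A secondary difference: you propose the contraction in a weaker metric (an $E_0$-level estimate for $\overline{p}$, with $B$ closed under weak-$\star$ limits), whereas the paper contracts in the full $\|\cdot\|_{X_T}$ norm by estimating $\|\nabla\overline{f}_1\|_{L^2(L^2)}$ and $\|\partial_t\overline{f}_1\|_{L^2(H^{-1})}$ and re-running the complete higher-order energy estimate for the difference system. Your route is legitimate in principle and is sometimes more economical, but for this quasilinear structure it is not free: the source difference contains $\{q(\Theta^{(1)}_*)-q(\Theta^{(2)}_*)\}\Delta p^{(2)}_*$ and $\overline{p}_*\,p^{(2)}_{*tt}$, so you must route these through $L^{6/5}(\Omega)\hookrightarrow H^{-1}(\Omega)$ or interpolation to avoid needing strong norms (e.g.\ $\|\overline{\Theta}_*\|_{L^\infty(L^\infty)}$, $\|\overline{p}_*\|_{L^\infty(L^\infty)}$) of the differences that the weak metric does not control, and you must actually verify completeness of $(B,d_{\textup{weak}})$. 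These points are asserted rather than carried out in your sketch. The remainder — verification of Assumption~\ref{Assumptions:LinProblem} for $\alpha$, $r$, $f_1$ via the growth bounds \eqref{Assumption_k_2} and Sobolev embeddings, the $O(1)$ temperature radius forced by the ambient term, and continuous dependence from the difference estimates — matches the paper.
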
  
\begin{proof} 
As already announced, we intend to rely on Banach's fixed-point theorem to arrive at the claim. To facilitate the fixed-point argument, we define the pressure and temperature norms:
\begin{equation}
\begin{aligned}
\Vert p\Vert_{X_p}=&\,\begin{multlined}[t] \Vert  p \Vert_{L^\infty(H^3)}+\Vert  p_t \Vert_{L^\infty(H^2)}+\|\nabla \Delta p_t\|_{L^2(L^2)} +\Vert  p_{tt} \Vert_{L^\infty( L^2)}\\+\Vert  p_{tt} \Vert_{L^2(H^1(\Omega))}
+\Vert  p_{ttt} \Vert_{L^2(H^{-1}(\Omega))}
 \end{multlined}
\end{aligned}
\end{equation}
and 
\begin{equation}
\begin{aligned}
\Vert \Theta\Vert_{X_\Theta}=&\,\begin{multlined}[t] \Vert\Theta \Vert_{L^\infty(H^2)}+\Vert \Theta_t \Vert_{L^\infty(L^2)}
+\Vert \Theta_t \Vert_{L^2(H^1)}+\Vert \Theta_{tt} \Vert_{L^2(H^{-1})}.\end{multlined}
\end{aligned}
\end{equation}
We can then also define the combined norm as follows:
\begin{equation}
\Vert (p,\Theta)\Vert_{X_T}=\Vert p\Vert_{X_p}+\Vert \Theta\Vert_{X_\Theta}. 
\end{equation}  
To have an equivalence between this norm and the energies, we introduce the total pressure energy $\mathbb{E}[p]$ as 
\begin{equation}
\mathbb{E}[p](T)=\sup_{t \in (0,T)}\mathcal{E}[p](t)+\sup_{t \in (0,T)}\|\Delta p_t(t)\|^2_{L^2}
\end{equation}
and the associated dissipation rate as 
\begin{equation}
\mathbb{D}(t)=\mathcal{D}[p](t)+\int_0^t(\|p_{ttt}(s)\|^2_{H^{-1}}+\|\nabla \Delta p_t(s)\|^2_{L^2})\ds.  
\end{equation}
Then on account of Assumption \ref{Assumptions:LinProblem}, there exist positive constants  $C_1, \ldots, C_4$, such that  
\begin{equation} \label{eqv_temp}
\begin{aligned}
C_1\left(\mathbb{E}[p](T)+\mathbb{D}[p](T)\right)
\leq\, \Vert p \Vert_{X_p}^2
\leq\, C_2 \left(\mathbb{E}[p](T)+\mathbb{D}[p](T)\right)
\end{aligned}
\end{equation}
and
\begin{equation} \label{eqv_temp}
\begin{aligned}
C_3\left(\sup_{t \in (0,T)}\mathcal{E}[\Theta](t)+\mathcal{D}[\Theta](T)\right)
\leq\, \Vert \Theta \Vert_{X_\Theta}^2
\leq\, C_4 \left(\sup_{t \in (0,T)}\mathcal{E}[\Theta](t)+\mathcal{D}[\Theta](T)\right). 
\end{aligned}
\end{equation}
We next introduce a ball in $X_T$:
\begin{equation}
\begin{aligned}
B= \left\{(p_*, \Theta_*)\in X_T:\right.&\, \|p_*\|_{L^\infty(L^\infty)} \leq \gamma < \frac{1}{2k_1}, \quad
 \|p_*\|_{X_p} \leq R_1,\\ &\left. \|\Theta_*\|_{X_\Theta} \leq R_2, \quad
(p_*, p_{*t}, \Theta_*)_{\vert t=0}=(p_0, p_1, \Theta_0) \right\},
\end{aligned}
\end{equation}
where the radii $R_1>0$ and $R_2>0$ are to be determined by the proof. The constant $k_1>0$ is such that
\[
|k(\Theta)| \leq k_1;
\] 
cf.\ assumption \eqref{Assumption_k_1}. In the course of the proof we will impose a smallness condition on the pressure, but not on the temperature data, which is why we have introduced two different radii here. \\ 
\indent Note that the solution of the linear problem with $\alpha=r=1$ and $f_1=f_2=0$, belongs to this ball if $\delta>0$ is small enough and $R_2$ large enough, so that\\
\[R_1^2 \geq C_T \delta \geq C_T\mathcal{E}[p](0), \qquad R_2^2 \geq \tilde{C}_T(\|\Theta_0\|^2_{\Honetwo}+\delta^2+1),\]
so this set is non-empty. We consider the ball to be equipped with the distance 
\begin{equation}    
d[(p_1,p_2), (\Theta_1, \Theta_2)]=\Vert p_1-p_2\Vert_{X_p}+\Vert \Theta_1-\Theta_2\Vert_{X_\Theta}.
\end{equation}
 Then $(B, d)$ is a complete metric space. We first prove that $\mathcal{T}$ is a self-mapping.
\begin{lemma} \label{Lemma2}
For sufficiently small $R_1$ and $\delta$, it holds that $\mathcal{T}(B) \subset B$.
	\end{lemma}
\begin{proof}
 We wish to rely on the well-posedness result from the previous section. To this end, we set
\begin{equation}\label{Coeff_Nonli_Problem}
\alpha(x,t)= 1-2k(\Theta_*)p_*, \quad r(x,t)=q(\Theta_*), \quad f_1(x,t)= 2k(\Theta_*)p_{*t}^2, \quad f_2(x,t)=0.
\end{equation}  
to fit problem \eqref{System_Fixed_Point} into the framework of Proposition \ref{Prop:LinWellp}. We next verify Assumption~\ref{Assumptions:LinProblem} on these functions. Since
\begin{equation}
\begin{aligned}
\|2k(\Theta_*)p_*\|_{L^\infty(L^\infty)} \leq 2k_1 \|p_*\|_{L^\infty(L^\infty)} \leq 2k_1 \gamma
\end{aligned}  
\end{equation}
we have  
\[
0<\alpha_0=1-2k_1 \gamma \leq \alpha(x,t)=1-2k(\Theta_*)p_* \leq 1+2k_1 \gamma=\alpha_1
\]
and so the non-degeneracy condition is fulfilled. Further, by the embeddings $H^1(\Omega) \hookrightarrow L^3(\Omega)$ and $H^2(\Om) \hookrightarrow L^\infty(\Omega)$, we have
\begin{equation}
\begin{aligned}
\|\alpha\|_{L^\infty(W^{1,3})}\lesssim&\,  \|1-2k(\Theta_*)p_*\|_{L^\infty(L^3)}+ \|\nabla (k(\Theta_*))p_*\|_{L^\infty(L^3)}+ \|k(\Theta_*)\nabla p_*\|_{L^\infty(L^3)}\\
\lesssim&\,\begin{multlined}[t] 1+k_1\|p_*\|_{L^\infty(H^1)}+\| k'(\Theta_*)\|_{L^\infty(L^\infty)} \|\nabla \Theta_*\|_{L^\infty(L^3)}\|p_*\|_{L^\infty(H^2)}\\+k_1\|p_*\|_{L^\infty(H^1)}.\end{multlined}
\end{aligned}  
\end{equation}
From here and properties \eqref{Assumption_k_2} of the function $k$, it follows that
\begin{equation}
\begin{aligned}
\|\alpha\|_{L^\infty(W^{1,3})}\lesssim&\,  1+R_1+(1+R_2^{\gamma_2+1})R_1R_2.
\end{aligned}  
\end{equation}
Again by the embedding $H^1(\Omega) \hookrightarrow L^3(\Omega)$ and properties of the function $k$, it holds that
\begin{equation}
\begin{aligned}
\|\alpha_t\|_{L^{2}(L^3)} =&\, \|-2k(\Theta_*)p_{*t}-2k'(\Theta_*)\Theta_{*t}p_*\|_{L^{2}(L^3)} \\
\lesssim&\,q_0^{-1}\|\nabla p_{*t}\|_{L^2(L^2)}+q_0^{-2}(1+\|\Theta_*\|^{\gamma_2+1}_{L^\infty(L^\infty)})\|\nabla \Theta_{*t}\|_{L^2(L^2)}\|p_*\|_{L^{\infty}(L^\infty)},
\end{aligned}
\end{equation}
which implies
\begin{equation}
\begin{aligned}
\|\alpha_t\|_{L^{2}(L^3)} 
\lesssim\, R_1+(1+R_2^{\gamma_2+1})R_1R_2.
\end{aligned}
\end{equation}
Similarly,
\[
\begin{aligned}
\|\alpha_t\|_{L^{\infty}(L^2)} =&\, \|-2k(\Theta_*)p_{*t}-2k'(\Theta_*)\Theta_{*t}p_*\|_{L^{\infty}(L^2)} \\
\lesssim&\,q_0^{-1}\|\nabla p_{*t}\|_{L^\infty(L^2)}+q_0^{-2}(1+\|\Theta_*\|^{\gamma_2+1}_{L^\infty(L^\infty)})\|\Theta_{*t}\|_{L^\infty(L^2)}\|p_*\|_{L^{\infty}(L^\infty)} \\
\lesssim&\, R_1+(1+R_2^{\gamma_2+1})R_1R_2.
\end{aligned}
\]
We can analogously estimate the function $r$:
\begin{equation}
\begin{aligned}
\|r_t\|_{L^\infty(L^2)} \lesssim&\, \|q'(\Theta_*)\|_{L^\infty(L^\infty)} \|\Theta_{t*}\|_{L^\infty(L^2)},\\
\|\nabla r\|_{L^\infty(L^4)}=&\,\|q'(\Theta_*) \nabla \Theta_*\|_{L^\infty(L^4)} \lesssim \|q'(\Theta_*)\|_{L^\infty(L^\infty)} \|\Theta_*\|_{L^\infty(H^2)},
\end{aligned}
\end{equation} 
and thus
\begin{equation}
\begin{aligned}
\|r_t\|_{L^\infty(L^2)} \lesssim 1+(1+R_2^{\gamma_1+1})R_2,\qquad
\|r\|_{L^{\infty}(W^{1,4})}\lesssim\,1+(1+R_2^{\gamma_1+1})R_2.
\end{aligned}
\end{equation} 
We can further estimate the source term in the pressure equation as follows:
\begin{equation}
\begin{aligned}
&\|f_1\|_{L^2(H^1)}+\|\partial_t f_1\|_{L^2(H^{-1})}\\
 \lesssim&\, \|k(\Theta_*)p_{*t}^2\|_{L^2(H^1)}+\|\partial_t(k(\Theta_*)p_{*t}^2)\|_{L^2(H^{-1})} \\
\lesssim&\,\begin{multlined}[t] \|k'(\Theta_*)\nabla \Theta_* p_{*t}^2\|_{L^2(L^2)}+\| k(\Theta_*) p_{*t} \nabla p_{*t}\|_{L^2(L^2)}\\+\|k(\Theta_*)p_{*t}^2\|_{L^2(L^2)}+\|k'(\Theta_*)\Theta_{t*} p_{*t}^2\|_{L^2(H^{-1})}+\|k(\Theta_*)p_{*t}p_{*tt}\|_{L^2(H^{-1})}.
\end{multlined}
\end{aligned}  
\end{equation}
By using the embedding $L^{6/5}(\Omega) \hookrightarrow H^{-1}(\Omega)$ and the inequality
\begin{equation} \label{ineq:uvw}
\|uvw\|_{L^{6/5}} \leq \|u\|_{L^2}\|v\|_{L^3}\|w\|_{L^\infty}
\end{equation}
we then further have
\begin{equation} \label{est_f}
\begin{aligned}
&\|f_1\|_{L^2(H^1)}+\|\partial_t f_1\|_{L^2(H^{-1})}\\
\lesssim&\,\begin{multlined}[t] \|k'(\Theta_*)\|_{L^\infty(L^\infty)}\|\nabla \Theta_*\|_{L^\infty(L^6)}\| p_{*t}^2\|_{L^2(L^3)}+k_1\|p_{*t}\|_{L^\infty(L^4)} \|\nabla p_{*t}\|_{L^2(L^4)}\\+k_1\|p_{*t}^2\|_{L^2(L^2)}+\|k'(\Theta_*)\|_{L^\infty(L^\infty)}\|\Theta_{t*}\|_{L^2(L^3)}\|p_{*t}^2\|_{L^\infty(L^2)}\\+k_1\|p_{*t}\|_{L^\infty(L^3)}\|p_{*tt}\|_{L^2(L^2)}.
\end{multlined}
\end{aligned}  
\end{equation}
Thus,
\begin{equation}\label{f_H_1_H-1}
\begin{aligned}
\|f_1\|_{L^2(H^1)}+\|\partial_t f_1\|_{L^2(H^{-1})}
\lesssim\,(1+R_2^{\gamma_2+1})R_2R_1^2+R_1^2.
\end{aligned}  
\end{equation}
On account of Proposition~\ref{Prop:LinWellp}, the mapping $\mathcal{T}$ is well-defined, and, furthermore,
\begin{equation} \label{energy_est_selfmapping}
\begin{aligned}
&\mathcal{E}[p](t)+ \|\Delta p_t(t)\|^2_{L^2}  +\int_0^t\mathcal{D}[p](s)\ds+\int_0^t\|p_{ttt}(s)\|^2_{H^{-1}}\ds\\
\lesssim &\, \mathcal{E}[p](0)\exp{\left(\int_0^t (1+\Lambda(s))\ds\right)}+\int_0^t\exp{\left(\int_s^t (1+\Lambda(\sigma))\textup{d}\sigma\right)}\mathbb{F}(s)\ds
\end{aligned}
\end{equation}
a.e.\ in time, with $\Lambda(t)$ and $\mathbb{F}(t)$ defined in \eqref{def_Lamba} and \eqref{F_Terms}, respectively; that is,  
\begin{equation} 
\Lambda(t)=\left\Vert r_t(t)\right\Vert_{L^2}^2+\|\nabla r(t)\|_{L^4}+\left\Vert \alpha_t(t)\right\Vert_{L^2}+\Vert \alpha_t(t)\Vert_{L^3}^2+\left\|\nabla \alpha(t)\right\|_{L^3}^2 
\end{equation}  
and    
\begin{equation}
\mathbb{F}(t)=\Vert f_1(t)\Vert_{H^1}^2+(1+\left\|\nabla \alpha(t)\right\|_{L^3}^2)\Vert \partial_tf_1(t)\Vert_{H^{-1}}^2.
\end{equation}
By our calculations above, we immediately have
\begin{equation}
\begin{aligned}
%\|\Lambda\|_{L^1(0,t)} \lesssim \begin{multlined}[t] 1+(1+R_2^{\gamma_1+1})^2R_2^2+(1+R_2^{\gamma_1+1})R_2+R_1+R_1^2+(1+R_2^{\gamma_2+1})R_1R_2\\+(1+R_2^{\gamma_2+1})^2R_1^2R_2^2. \end{multlined}
\|\Lambda\|_{L^1(0,t)} \leq C_1(R_1, R_2, T),
\end{aligned}
\end{equation}
where $C_1=C_1(T, R_1, R_2)$ is a positive constant that depends on $T, R_1$, and $R_2$. Furthermore, by relying on \eqref{est_f}, we obtain %again relying on the $H^{-1}$ estimate \eqref{ineq:uvw}, we have
\begin{comment}
\begin{equation}
\begin{aligned}
\|\mathbb{F}\|_{L^1(0,t)}=&\,\int_0^t \|2k(\Theta_*)p_{*t}^2\|_{H^1}^2\ds +\int_0^t (1+\left\|\nabla \alpha(t)\right\|_{L^3}^2)\|\partial_t (2k(\Theta_*)p_{*t}^2)\|^2_{H^{-1}}\ds\\
\lesssim&\,\begin{multlined}[t] q_0^{-1}\|p_{*t}\|^2_{L^\infty(L^\infty)}\|\nabla p_{*t}\|^2_{L^2(L^2)}+(1+\|\alpha\|_{L^\infty(W^{1,3})})\|k'(\Theta_*)\Theta_{*t}p_{*t}^2\|_{L^2(H^{-1})} \\
+(1+\|\alpha\|_{L^\infty(W^{1,3})})\|k(\Theta_*)p_{*t}p_{*tt}\|_{L^2(H^{-1})}\end{multlined}\\
\lesssim&\,\begin{multlined}[t] R_1^4+(1+\|\alpha\|_{L^\infty(W^{1,3})})\Big\{\left(1+\|\Theta_*\|^{\gamma_2+1}_{L^\infty(L^\infty)}\right)\|\Theta_{*t}\|_{L^\infty(L^2)}\|p_{*t}^2\|_{L^2(L^3)} \\  +q_0^{-1}\|p_{*t}\|_{L^\infty(L^3)}\|p_{*tt}\|_{L^2(L^2)} \vphantom{\left(1+\|\Theta_*\|^{\gamma_1+1}\right)}\Big\}. \end{multlined}
\end{aligned}
\end{equation}
Thus, we conclude that
\begin{equation}
\begin{aligned}
\|\mathbb{F}\|_{L^1(0,t)} \lesssim R_1^4+ \left(1+R_1+(1+R_2^{\gamma_2+1})R_1R_2\right)\left\{(1+R_2^{\gamma_2+1})R_2R_1^2+R_1^2\right\}.
\end{aligned}
\end{equation}
\end{comment}
\begin{equation}
\begin{aligned}
\|\mathbb{F}\|_{L^1(0,t)} \lesssim&\, (1+\left\|\nabla \alpha\right\|_{L^\infty(L^3)}^2)(\Vert f_1\Vert_{L^2(H^1)}^2+\Vert \partial_tf_1\Vert_{L^2(H^{-1})}^2) \\
\lesssim& (1+R_1^2+(1+R_2^{2\gamma_2+2})R_1^2R_2^2)\left\{(1+R_2^{2\gamma_2+2})R_2^2R_1^4+R_1^4\right\}.
\end{aligned}
\end{equation}
Altogether, from \eqref{energy_est_selfmapping} and the above bounds, we have
\begin{equation} \label{R_bound}
\begin{aligned}
\Vert p \Vert_{X_p}^2\lesssim \begin{multlined}[t] \delta\exp(C_1(R_1, R_2, T)T)+\exp(C_1(R_1, R_2, T)T)\,R_1^4\,C_2(R_1, R_2). \end{multlined}
\end{aligned}
\end{equation}
Thus, from \eqref{R_bound}, by decreasing $R_1$ and $\delta$, we can achieve that
\begin{equation}
\begin{aligned}
\|p\|_{X_p}^2 \leq  R_1^2.
\end{aligned}
\end{equation}
Further, by the embedding $H^2(\Omega) \hookrightarrow L^\infty(\Omega)$, we know that
\begin{equation}
\begin{aligned}
\|p\|^2_{L^\infty(L^\infty)} \lesssim \|\Delta p\|_{L^\infty(L^2)}^2  \lesssim  \|p\|_{X_p}^2,
\end{aligned}
\end{equation}
which we can then bound by $\gamma \in (0, 1/(2k))$ by possibly additionally reducing $\delta$ and $R_1$. It remains to show that $\|\Theta\|_{X_\Theta} \leq R_2$. Proposition~\ref{Prop:LinWellp} with $f_2=0$ implies that 
\begin{equation}
\begin{aligned}        
&\mathcal{E}[\Theta](t)+\int_0^t\mathcal{D}[\Theta](s)\ds\\
\leq&\, C_T \big(\|\Theta_0\|_{\Honetwo}^2+ \|p_t\|^2_{L^\infty(L^\infty)}\|p_t\|^2_{L^2(L^2)}+ \|p_t\|_{L^2(L^\infty)}^2\| p_{tt}\|^2_{L^\infty(L^2)}+1\big).
\end{aligned}
\end{equation}
With the equivalence of the temperature norm and energy \eqref{eqv_temp}, we have
\begin{equation}
\begin{aligned}
\|\Theta\|^2_{X_\Theta} \leq&\, \begin{multlined}[t] C_T \big(\|\Theta_0\|_{\Honetwo}^2+ \|p_t\|^2_{L^\infty(L^\infty)}\|p_t\|^2_{L^2(L^2)}+ \|p_t\|_{L^2(L^\infty)}^2\| p_{tt}\|^2_{L^\infty(L^2)}+1\big)\end{multlined} \\
\leq&\,  \tilde{C}_T \left(\|\Theta_0\|_{\Honetwo}^2+ 2R_1^4+1\right).
\end{aligned}
\end{equation}
Thus, if we additionally choose $R_2$ large enough, so that 
\[R_2^2 \geq \tilde{C}_T  \left(\|\Theta_0\|_{\Honetwo}^2+ 2R_1^4+1\right),\]
we have $(p, \Theta) \in B$.
\end{proof}
\begin{lemma} \label{Lemma3}
For sufficiently small $R_1$ and $\delta$, the mapping $\mathcal{T}$ is strictly contractive in the topology induced by $\|\cdot\|_{X_T}$.
\end{lemma}	
\begin{proof}
 To prove contractivity, take any $(p^{(1)}_*, \Theta^{(1)}_*)$ and $(p^{(2)}_*, \Theta^{(2)}_*)$ from $B$. Denote their images by $(p^{(1)}, \Theta^{(1)})=\mathcal{T}(p^{(1)}_*, \Theta^{(1)}_*)$ and $(p^{(2)}, \Theta^{(2)})=\mathcal{T}(p^{(2)}_*, \Theta^{(2)}_*)$. We introduce the differences
\begin{equation}
\begin{aligned}
\overline{p}=&\, p^{(1)}-p^{(2)}, \qquad  \overline{p}_*=p^{(1)}_*-p^{(2)}_*, \\
\overline{\Theta}=&\, \Theta^{(1)}-\Theta^{(2)}, \qquad  \overline{\Theta}^*=\Theta^{(1)}_*-\Theta^{(2)}_*.
\end{aligned}  
\end{equation}
Our goal now is to prove that 
\begin{equation}\label{Contra_map}
\begin{aligned}
&\Vert \mathcal{T}(p^{(1)}_*, \Theta^{(1)}_*)-\mathcal{T}(p^{(2)}_*, \Theta^{(2)}_*)\Vert_{X_T}\\
\leq&\, R_1C(T,R_1,R_2) \Vert (p^{(1)}_* -p^{(2)}_*, \Theta^{(1)}_*-\Theta^{(2)}_*)\Vert_{X_T}, 
\end{aligned}
\end{equation}
where $C$ is a positive constant that depends on $T, R_1$, and $R_2$. Observe that  
 $(\overline{p}, \overline{\Theta})$ solves the following problem:
\begin{equation} \label{System_Contract}
\left\{ \begin{aligned}
& \begin{multlined}(1-2k(\Theta^{(1)}_*)p^{(1)}_*)\overline{p}_{tt}-q(\Theta^{(1)}_*)\Delta \overline{p} - b \Delta \overline{p}_t= \overline{f}_1 \end{multlined}\quad &&\text{in} \ \Omega \times (0,T), \\[1mm]
& 	\rhoa \Ca\overline{\Theta}_t -\kappaa\Delta \overline{\Theta}+ \rhob \Cb W \overline{\Theta}= \overline{f}_2 \quad &&\text{in} \ \Omega \times (0,T),\\
& \overline{p}= \overline{\Theta}=0,\quad &&\text{on} \ \partial \Omega \times (0,T),\\
& \overline{p}(x, 0)=\overline{p}_t(x, 0)=\overline{\Theta}(x, 0)=0,\quad &&\text{in} \ \Omega ,
\end{aligned} \right.
\end{equation}
with the right-hand sides
\begin{equation} \label{overline_f1}
\begin{aligned}
\overline{f}_1=&\, \begin{multlined}[t] \left\{2k(\Theta^{(1)}_*)p^{(1)}_*-2k(\Theta^{(2)}_*)p^{(2)}_*\right\}p_{*tt}^{(2)}+\left\{q(\Theta^{(1)}_*)-q(\Theta^{(2)}_*)\right\}\Delta p^{(2)}_*\\+2k(\Theta^{(1)}_*)(p_{*t}^{(1)})^2-2k(\Theta^{(2)}_*)(p_{*t}^{(2)})^2
\end{multlined}
\end{aligned}
\end{equation}
and
\begin{equation} \label{overline_f2}
\overline{f}_2= \mathcal{Q}(p_{*t}^{(1)})-\mathcal{Q}(p_{*t}^{(2)}).
\end{equation}  
We can rearrange the acoustic source term $\overline{f}_1$ as follows:
\begin{equation}\label{f_1_Formula}
\begin{aligned}
&\overline{f}_1\\
=&\, \begin{multlined}[t] 2\Big\{k(\Theta^{(1)}_*)-k(\Theta^{(2)}_*)\Big\}p^{(1)}_*p_{*tt}^{(2)}+2k(\Theta^{(2)}_*)\overline{p}_*p_{*tt}^{(2)}+\Big\{q(\Theta^{(1)}_*)-q(\Theta^{(2)}_*)\Big\}\Delta p^{(2)}_*\\+2\left\{k(\Theta^{(1)}_*)-k(\Theta^{(2)}_*)\right\}(p_{*t}^{(1)})^2+2k(\Theta^{(2)}_*)\overline{p}_{*t}(p_{*t}^{(1)}+p_{*t}^{(2)})
\end{multlined} \\
=&\, \begin{multlined}[t] 2\left\{k(\Theta^{(1)}_*)-k(\Theta^{(2)}_*)\right\}\left(p^{(1)}_*p_{*tt}^{(2)}+(p_{*t}^{(1)})^2\right)+\left\{q(\Theta^{(1)}_*)-q(\Theta^{(2)}_*)\right\}\Delta p^{(2)}_*\\+2k(\Theta^{(2)}_*)\left(\overline{p}_*p_{*tt}^{(2)}+\overline{p}_{*t}(p_{*t}^{(1)}+p_{*t}^{(2)})\right)
\end{multlined} \\  
:=&\,\overline{f}_{11}+\overline{f}_{12}+\overline{f}_{13}  
\end{aligned}
\end{equation}    
and next wish to show that it satisfies Assumption~\ref{Assumptions:LinProblem}.

\subsubsection*{The estimate of $\Vert \overline{f}_1\Vert_{L^2(H^1)}$} 
 Note that since $\overline{f}_1=0$ on $\partial{\Omega}$, it is sufficient to estimate $\Vert \nabla \overline{f}_1\Vert_{L^2(L^2)}$. We first estimate the $\overline{f}_{11}$ contribution, that is
 \[
\overline{f}_{11}= 2\left\{k(\Theta^{(1)}_*)-k(\Theta^{(2)}_*)\right\}\left(p^{(1)}_*p_{*tt}^{(2)}+(p_{*t}^{(1)})^2\right).
 \] By H\"older's inequality, we have 
 \begin{equation}\label{nabla_f_11_Estimate}
\begin{aligned}  
\Vert \nabla \overline{f}_{11}\Vert_{L^2(L^2)}\lesssim &\Vert \, k(\Theta^{(1)}_*)-k(\Theta^{(2)}_*)\Vert_{L^\infty(L^\infty)}\|\nabla(p^{(1)}_*p_{*tt}^{(2)}+(p_{*t}^{(1)})^2)\|_{L^2(L^2)}\\
&+ \|\nabla(k(\Theta^{(1)}_*)-k(\Theta^{(2)}_*))\|_{L^\infty(L^4)}\|p^{(1)}_*p_{*tt}^{(2)}+(p_{*t}^{(1)})^2\|_{L^2(L^4)}.
\end{aligned}
\end{equation}
Recalling properties \eqref{Assumption_k_1}  and \eqref{Assumption_k_2} of the function $k$, and using the algebraic inequality:
\begin{equation}
(A+B)^\nu\leq \max\{1, 2^\nu\}(A^\nu+B^\nu),\quad \text{for}\quad A,\, B\geq 0,\, \nu>0, 
\end{equation}
 we have
\begin{equation}\label{First_Term_f_1}
\begin{aligned}
&\Vert \, k(\Theta^{(1)}_*)-k(\Theta^{(2)}_*)\Vert_{L^\infty(L^\infty)}\\
= &\,\Big\Vert (\Theta^{(1)}_*-\Theta^{(2)}_*)\int_0^1 k^\prime (\Theta^{(1)}_*+\tau(\Theta^{(1)}_*-\Theta^{(2)}_*))\, \textup{d}\tau \Big\Vert_{L^\infty (L^\infty)}\\  
\lesssim  &\, \Vert\Theta^{(1)}_*-\Theta^{(2)}_*\Vert_{L^\infty(L^\infty)}\Big(1+\Vert \Theta^{(1)}_*+\tau(\Theta^{(1)}_*-\Theta^{(2)}_*) \Vert_{L^\infty(L^\infty)}^{\gamma_2+1}\Big)\\
%\lesssim  &\, \Vert\Theta^{(1)}_*-\Theta^{(2)}_*\Vert_{L^\infty(L^\infty)}\Big(1+\Vert \Theta^{(1)}_*\Vert_{L^\infty(L^\infty)}^{\gamma_2+1}+\Vert \Theta^{(2)}_*\Vert_{L^\infty(L^\infty)}^{\gamma_2+1}\Big)\\
\lesssim  &\,  \Vert (p^{(1)}_* -p^{(2)}_*, \Theta^{(1)}_*-\Theta^{(2)}_*)\Vert_{X_T}\left\{1+\Vert  \Theta^{(1)}_*\Vert_{X_\Theta}^{\gamma_2+1}+ \Vert  \Theta^{(2)}_*\Vert_{X_\Theta}^{\gamma_2+1}\right\}.
\end{aligned} 
\end{equation}   
We also have, by using the embeddings $H^1(\Om)\hookrightarrow L^4(\Om)$ and $H^2(\Om)\hookrightarrow L^\infty(\Om)$, the following estimate:
\begin{equation}\label{p_p_t_Term_Estimate}
\begin{aligned}
&\|\nabla(p^{(1)}_*p_{*tt}^{(2)}+(p_{*t}^{(1)})^2)\|_{L^2(L^2)}\\
\lesssim&\,\begin{multlined}[t]\Vert \nabla p^{(1)}_*\Vert_{L^\infty(L^4)}\Vert p_{*tt}^{(2)}\Vert_{L^2(L^4)}+\Vert  p^{(1)}_*\Vert_{L^\infty(L^\infty)}\Vert \nabla p_{*tt}^{(2)}\Vert_{L^2(L^2)} \\
+\Vert p^{(1)}_{*t}\Vert_{L^2(L^\infty)}\Vert \nabla p^{(1)}_{*t}\Vert_{L^\infty (L^2)} \end{multlined} \\   
\lesssim &\, \begin{multlined}[t]\Vert \Delta p^{(1)}_*\Vert_{L^\infty(L^2)}\Vert \nabla p_{*tt}^{(2)}\Vert_{L^2(L^2)}+\Vert \Delta  p^{(1)}_*\Vert_{L^\infty(L^2)}\Vert \nabla p_{*tt}^{(2)}\Vert_{L^2(L^2)}\\
+\Vert\Delta p_{*t}^{(1)}\Vert_{L^2 (L^2)}\Vert \nabla p_{*t}^{(1)}\Vert_{L^\infty (L^2)}. \end{multlined}
\end{aligned}   
\end{equation}
Thus, from \eqref{p_p_t_Term_Estimate} it follows that
 \begin{equation}\label{p_p_t_Term_Estimate_2}
\begin{aligned}
\|\nabla(p^{(1)}_*p_{*tt}^{(2)}+(p_{*t}^{(1)})^2)\|_{L^2(L^2)}
\lesssim \, \Vert p^{(1)}_*\Vert_{X_p}^2. 
\end{aligned}
\end{equation}
Further, we know that 
\begin{equation}
\begin{aligned}
\nabla(k(\Theta^{(1)}_*)-k(\Theta^{(2)}_*))=&\,k^\prime (\Theta^{(1)}_*)\nabla \Theta^{(1)}_*-k^\prime (\Theta^{(2)}_*)\nabla \Theta^{(2)}_*\\
=&\, k^\prime (\Theta^{(1)}_*) \nabla (\Theta^{(1)}_*-\Theta^{(2)}_*)+\nabla \Theta^{(2)}_*(k^\prime (\Theta^{(1)}_*)-k^\prime (\Theta^{(2)}_*))
\end{aligned}
\end{equation}
and
\begin{equation}
\begin{aligned}
k^\prime (\Theta^{(1)}_*)-k^\prime (\Theta^{(2)}_*)= (\Theta^{(1)}_*-\Theta^{(2)}_*)\int_0^1 k'' (\Theta^{(1)}_*+\tau(\Theta^{(1)}_*-\Theta^{(2)}_*))\, \textup{d}\tau.
\end{aligned}
\end{equation}
By keeping in mind properties \eqref{Assumption_k_1} and  \eqref{Assumption_k_2} of the function $k$, this implies that 
   \begin{equation}
\begin{aligned}
&\|\nabla(k(\Theta^{(1)}_*)-k(\Theta^{(2)}_*))\|_{L^\infty(L^4)}\\
\lesssim&\, \begin{multlined}[t]\left(1+ \|\Theta^{(1)}_*\|^{\gamma_2+1}_{L^\infty(L^\infty)}\right) \Vert\nabla (\Theta^{(1)}_* -\Theta^{(2)}_*)\Vert_{L^\infty(L^4)}\\
+\left(1+\Vert \Theta^{(1)}_*\Vert_{L^\infty(L^\infty)}^{\gamma_2}+\Vert \Theta^{(2)}_*\Vert_{L^\infty(L^\infty)}^{\gamma_2} \right) 
\Vert \nabla \Theta^{(2)}_*\Vert_{L^\infty(L^4)}\|\Theta^{(1)}_*-\Theta^{(2)}_*\|_{L^\infty(L^\infty)} \end{multlined}
\end{aligned}
\end{equation} 
and thus
  \begin{equation}\label{nabla_k_Estimate}
\begin{aligned}
&\|\nabla(k(\Theta^{(1)}_*)-k(\Theta^{(2)}_*))\|_{L^\infty(L^4)}\\
\lesssim&\, \begin{multlined}[t] \left\{1+\Vert  \Theta^{(1)}_*\Vert_{X_\Theta}^{\gamma_2}+ \Vert  \Theta^{(2)}_*\Vert_{X_\Theta}^{\gamma_2}+\Vert  \Theta^{(1)}_*\Vert_{X_\Theta}^{\gamma_2+1}\right\} 
\Vert (p^{(1)}_* -p^{(2)}_*, \Theta^{(1)}_*-\Theta^{(2)}_*)\Vert_{X_T}. \end{multlined}
\end{aligned}
\end{equation} 
To obtain a bound on $\nabla \overline{f}_{11}$, we note that
\begin{equation}\label{Fourth_Term_Estimate}
\begin{aligned}
&\|p^{(1)}_*p_{*tt}^{(2)}+(p_{*t}^{(1)})^2\|_{L^2(L^4)}\\
\lesssim&\, \Vert p^{(1)}_*\Vert_{L^2(L^\infty)} \Vert p_{*tt}^{(2)} \Vert_{L^2(L^4)}+\Vert p_{*t}^{(2)} \Vert_{L^\infty(L^4)}\Vert p_{*t}^{(2)} \Vert_{L^2(L^\infty)}\\
\lesssim&\, \sqrt{T} \Vert \Delta p^{(1)}_*\Vert_{L^\infty(L^2)}\Vert \nabla p_{*tt}^{(2)} \Vert_{L^2(L^2)}+\Vert \nabla p_{*t}^{(1)} \Vert_{L^\infty(L^2)}\Vert \Delta p_{*t}^{(1)} \Vert_{L^2(L^2)}\\
%\lesssim&\,\sqrt{T} \Vert \Delta p^{(1)}_*\Vert_{L^\infty(L^2)}\Vert \nabla p_{*tt}^{(2)} \Vert_{L^2(L^2)}
%+\Vert \nabla p_{*t}^{(1)} \Vert_{L^\infty(L^2)}  \Vert \Delta p_{*t}^{(1)} \Vert_{L^2(L^2)}\\
\lesssim& \,(1+\sqrt{T}) \left(\Vert p^{(1)}_*\Vert_{X_p}^2+\Vert p^{(2)}_*\Vert_{X_p}^2\right).
\end{aligned}  
 \end{equation}
Plugging the derived estimates into \eqref{nabla_f_11_Estimate} yields  
\begin{equation}\label{nabla_f_11_Estimate_2}
\begin{aligned}\label{nabla_f_1_1_Main}
\Vert \nabla \overline{f}_{11}\Vert_{L^2(L^2)}\lesssim &\, \begin{multlined}[t](1+\sqrt{T})R^2_1\left(1+R_2^{\gamma_2}+R_2^{\gamma_2+1} \right)
 \Vert (p^{(1)}_* -p^{(2)}_*, \Theta^{(1)}_*-\Theta^{(2)}_*)\Vert_{X_T} \end{multlined}.
\end{aligned}
\end{equation}
We can similarly estimate $\overline{f}_{12}=\left\{q(\Theta^{(1)}_*)-q(\Theta^{(2)}_*)\right\}\Delta p^{(2)}_*$ as follows:
\begin{equation}\label{nabla_f_12_1}
\begin{aligned}
\Vert \nabla \overline{f}_{12}\Vert_{L^2(L^2)}\lesssim&\, \begin{multlined}[t]\left\Vert \nabla( q(\Theta^{(1)}_*)-q(\Theta^{(2)}_*))\right\Vert_{L^\infty(L^4)}\Vert\Delta p^{(2)}_*\Vert_{L^2(L^4)}\\
+ \Vert  q(\Theta^{(1)}_*)-q(\Theta^{(2)}_*)\Vert_{L^\infty(L^\infty)}\Vert\nabla \Delta p^{(2)}_*\Vert_{L^2(L^2)}.\end{multlined}
\end{aligned}
\end{equation}
The first term on the right-hand side of \eqref{nabla_f_12_1} can be estimated analogously to \eqref{nabla_k_Estimate}. Thus we have by recalling Assumption \ref{Assumption_1},   
 \begin{equation}
 \begin{aligned}
&\left\Vert \nabla (q(\Theta^{(1)}_*)-q(\Theta^{(2)}_*))\right\Vert_{L^\infty(L^4)}\\
\lesssim &\, \left\{1+\Vert  \Theta^{(1)}_*\Vert_{X_\Theta}^{\gamma_1+1}+ \Vert  \Theta^{(2)}_*\Vert_{X_\Theta}^{\gamma_1+1}\right\} \Vert (p^{(1)}_* -p^{(2)}_*, \Theta^{(1)}_*-\Theta^{(2)}_*)\Vert_{X_T}.
\end{aligned}
\end{equation}
By using the embedding $H^1(\Om)\hookrightarrow L^4(\Om)$, we obtain 
\begin{equation}
\Vert\Delta p^{(2)}_*\Vert_{L^2(L^4)}\lesssim \Vert\Delta p^{(2)}_*\Vert_{L^2(L^2)}+\Vert\Delta\nabla  p^{(2)}_*\Vert_{L^2(L^2)}\lesssim \sqrt{T} \Vert p^{(2)}_*\Vert_{X_p}.
\end{equation}
We also have as in \eqref{First_Term_f_1}, 
\begin{equation}\label{q_L_infty}
\begin{aligned}
 &\left\Vert  q(\Theta^{(1)}_*)-q(\Theta^{(2)}_*)\right\Vert_{L^\infty(L^\infty)}\\
 \lesssim&\, \left\{1+\Vert \Theta^{(1)}_*\Vert_{X_\Theta}^{\gamma_1+1}+ \Vert  \Theta^{(2)}_*\Vert_{X_\Theta}^{\gamma_1+1}\right\}\Vert (p^{(1)}_* -p^{(2)}_*, \Theta^{(1)}_*-\Theta^{(2)}_*)\Vert_{X_T}.
 \end{aligned}
\end{equation}
Consequently, we obtain from above the following estimate: 
\begin{equation}\label{nabla_f_12_2}
\begin{aligned}
\Vert \nabla \overline{f}_{12}\Vert_{L^2(L^2)}\lesssim &\,\begin{multlined}[t](1+\sqrt{T})R_1\left(1+R_2^{\gamma_1}+R_2^{\gamma_1+1}+R_2^{2\gamma_1+2} \right)\\
\times\Vert (p^{(1)}_* -p^{(2)}_*, \Theta^{(1)}_*-\Theta^{(2)}_*)\Vert_{X_T}. \end{multlined}
\end{aligned}
\end{equation} 
Next we estimate $\overline{f}_{13}=2k(\Theta^{(2)}_*)\left(\overline{p}_*p_{*tt}^{(2)}+\overline{p}_{*t}(p_{*t}^{(1)}+p_{*t}^{(2)})\right)$.  We note that
\begin{equation}\label{nabla_f_13}
\begin{aligned}
\Vert\nabla \overline{f}_{13} \Vert_{L^2(L^2)}
\lesssim&\,\begin{multlined}[t] \Vert k'(\Theta^{(2)}_*)\nabla \Theta^{(2)}_* \Vert_{L^\infty(L^4)} \\
\times\Big( \Vert \overline{p}\Vert_{L^\infty (L^\infty )}\Vert p_{*tt}^{(2)} \Vert_{L^2(L^4)} +\Vert \overline{p}_t\Vert_{L^\infty (L^4 )}(\Vert p_{*t}^{(1)} \Vert_{L^2(L^\infty)}+\Vert p_{*t}^{(2)} \Vert_{L^2(L^\infty)})\Big) \\
+ \Vert k(\Theta^{(2)}_*) \Vert_{L^\infty(L^\infty)}\Vert  \nabla (\overline{p}_*p_{*tt}^{(2)}+\overline{p}_{*t}(p_{*t}^{(1)}+p_{*t}^{(2)}))\Vert_{L^2(L^2)}. \end{multlined}
\end{aligned}
\end{equation}
Using properties \eqref{Assumption_k_2} of the function $k$, we can bound the first term on the right:
\begin{equation}\label{nabla_f13_First_Term}
\begin{aligned}
\Vert k'(\Theta^{(2)}_*)\nabla \Theta^{(2)}_* \Vert_{L^\infty(L^4)} \lesssim&\, \Vert k'(\Theta^{(2)}_*)\Vert_{L^\infty(L^\infty)} \Vert \nabla \Theta^{(2)}_* \Vert_{L^\infty(L^4)} \\
\lesssim&\, (1+\Vert \Theta^{(2)}_*\Vert_{L^\infty(L^\infty)}^{\gamma_2+1})\Vert \Theta^{(2)}_*\Vert_{L^\infty(\Honetwo)}\\
\lesssim&\, (1+R_2^{\gamma_2+1})R_2.   
\end{aligned}  
\end{equation}
Further, we have
\begin{equation}\label{nabla_f13_Second_Term}
\begin{aligned}
 &\Vert \overline{p}\Vert_{L^\infty (L^\infty )}\Vert p_{*tt}^{(2)} \Vert_{L^2(L^4)} +\Vert \overline{p}_t\Vert_{L^\infty (L^4 )}(\Vert p_{*t}^{(1)} \Vert_{L^2(L^\infty)}+\Vert p_{*t}^{(2)} \Vert_{L^2(L^\infty)})\\
% \lesssim &\, \begin{multlined}[t] \Vert \Delta \overline{p}\Vert_{L^\infty (L^2 )}\Vert \nabla  p_{*tt}^{(2)} \Vert_{L^2 (L^2)} +\Vert \nabla  \overline{p}_t\Vert_{L^\infty (L^2 )}(\Vert\Delta p_{*t}^{(1)} \Vert_{L^2(L^2)}\\+\Vert \Delta p_{*t}^{(2)} \Vert_{L^2(L^2)})\end{multlined}\\
 \lesssim&\, \begin{multlined}[t]\Vert \Delta \overline{p}\Vert_{L^\infty (L^2 )}\Vert \nabla  p_{*tt}^{(2)} \Vert_{L^\infty (L^2)}
  +\Vert \nabla  \overline{p}_t\Vert_{L^\infty (L^2 )}(\Vert\Delta p_{*t}^{(1)} \Vert_{L^2(L^2)}+\Vert \Delta p_{*t}^{(2)} \Vert_{L^2(L^2)}) \end{multlined}\\
\lesssim &\, R_1 \Vert (p^{(1)}_* -p^{(2)}_*, \Theta^{(1)}_*-\Theta^{(2)}_*)\Vert_{X_T}.
\end{aligned}
\end{equation}
By using the fact that $|k(s)|\lesssim \frac{1}{q_0}$, we find
\begin{equation}\label{Last_Term_nabla_f13}
\begin{aligned}
&\Vert k(\Theta^{(2)}_*) \Vert_{L^\infty(L^\infty)}\Vert  \nabla (\overline{p}_*p_{*tt}^{(2)}+\overline{p}_{*t}(p_{*t}^{(1)}+p_{*t}^{(2)}))\Vert_{L^2(L^2)}\\
\lesssim&\,\begin{multlined}[t] \Vert \nabla \overline{p}_* \Vert_{L^\infty(L^4)} \Vert p_{*tt}^{(2)}\Vert_{L^2(L^4)}+\Vert  \overline{p}_* \Vert_{L^\infty(L^\infty)} \Vert \nabla p_{*tt}^{(2)}\Vert_{L^2(L^2)}\\
+ \Vert \nabla \overline{p}_{*t} \Vert_{L^2(L^4)} \Vert p_{*t}^{(1)}+p_{*t}^{(2)} \Vert_{L^\infty(L^4)}+ \Vert  \overline{p}_{*t} \Vert_{L^2(L^\infty)}\Vert \nabla p_{*t}^{(1)}+\nabla p_{*t}^{(2)} \Vert_{L^\infty(L^2)}\end{multlined} \\
\lesssim&\,\begin{multlined}[t]\Vert \Delta \overline{p}_* \Vert_{L^\infty(L^2)}\Vert \nabla p_{*tt}^{(2)}\Vert_{L^2(L^2)}+\Vert  \Delta \overline{p}_* \Vert_{L^\infty(L^2)} \Vert \nabla p_{*tt}^{(2)}\Vert_{L^2(L^2)}\\
+\Vert \Delta \overline{p}_{*t} \Vert_{L^2(L^2)}\Vert \nabla p_{*t}^{(1)}+\nabla p_{*t}^{(2)} \Vert_{L^\infty(L^2)}\\
+\Vert \Delta \overline{p}_{*t} \Vert_{L^2(L^2)}\Vert \nabla p_{*t}^{(1)}+\nabla p_{*t}^{(2)} \Vert_{L^\infty(L^2)}.\end{multlined}
\end{aligned}
\end{equation}
Hence,
\begin{equation}\label{Last_Term_nabla_f13_Main}
\begin{aligned}
&\Vert k(\Theta^{(2)}_*) \Vert_{L^\infty(L^\infty)}\Vert  \nabla (\overline{p}_*p_{*tt}^{(2)}+\overline{p}_{*t}(p_{*t}^{(1)}+p_{*t}^{(2)}))\Vert_{L^2(L^2)}\\
\lesssim&\,R_1 \Vert (p^{(1)}_* -p^{(2)}_*, \Theta^{(1)}_*-\Theta^{(2)}_*)\Vert_{X_T}. 
\end{aligned}
\end{equation}
Consequently, from the derived bounds we infer
\begin{equation}\label{nabla_f_13_Main}
\Vert\nabla \overline{f}_{13} \Vert_{L^2(L^2)}\lesssim C_T R_1  (1+R_2+R_2^{\gamma_2+2})\Vert (p^{(1)}_* -p^{(2)}_*, \Theta^{(1)}_*-\Theta^{(2)}_*)\Vert_{X_T}. 
\end{equation}
By collecting the derived estimates of separate contributions to $\overline{f}_1$, we arrive at 
\begin{equation}\label{nabla_f1_Main}
\begin{aligned}
&\Vert\nabla \overline{f}_1 \Vert_{L^2(L^2)}\\
\lesssim&\,\begin{multlined}[t] C_{T} (R_1+R_1^2)\left(1+R_2^{\gamma_1}+R_2^{\gamma_1+1}\right)
\Vert (p^{(1)}_* -p^{(2)}_*, \Theta^{(1)}_*-\Theta^{(2)}_*)\Vert_{X_T}. \end{multlined}
\end{aligned}
\end{equation}
\subsubsection*{The estimate of $\|\partial_t \overline{f}_1\|_{L^2(H^{-1})}$} Our next task is to estimate $\|\partial_t \overline{f}_1\|_{L^2(H^{-1})}$.  As above, we estimate the contributions $\|\partial_t \overline{f}_{1j}\|_{L^2(H^{-1})}$ for $j=1,2,3$ separately. We start by noting that 
\[
\begin{aligned}
\partial_t \overline{f}_{11}=&\, \begin{multlined}[t]
2\left\{k(\Theta^{(1)}_*)-k(\Theta^{(2)}_*)\right\}\left(p^{(1)}_*p_{*ttt}^{(2)}+p^{(1)}_{*t}p_{*tt}^{(2)}+2p_{*t}^{(1)}p_{*tt}^{(1)}\right)\\
+2\partial_t \left\{k(\Theta^{(1)}_*)-k(\Theta^{(2)}_*)\right\}\left(p^{(1)}_*p_{*tt}^{(2)}+(p_{*t}^{(1)})^2\right). \end{multlined}
\end{aligned}
\]
By employing the $H^{-1}$ estimate stated in \eqref{Hneg_estimate}, we then find that 
\begin{equation}\label{f_t_11}
\begin{aligned}
&\|\partial_t \overline{f}_{11}\|_{H^{-1}}\\
\lesssim&\,
\begin{multlined}[t]\left(\|k(\Theta^{(1)})_*-k(\Theta^{(2)}_*)\|_{L^\infty}+\|\nabla (k(\Theta^{(1)})_*- k(\Theta^{(2)}_*))\|_{L^3}\right)\|p^{(1)}_*p_{*ttt}^{(2)}\|_{H^{-1}}\\
+ \|k(\Theta^{(1)})_*-k(\Theta^{(2)}_*)\|_{L^\infty)}(\| p^{(1)}_{*t}p_{*tt}^{(2)} \|_{L^2}+\|p_{*t}^{(1)}p_{*tt}^{(1)}\|_{L^2})\\
+ \|\partial_t(k(\Theta^{(1)}_*)-k(\Theta^{(2)}_*))\|_{L^6}\|(p_{*t}^{(1)})^2\|_{L^3}\\
+\left\|\partial_t \left\{k(\Theta^{(1)}_*)-k(\Theta^{(2)}_*)\right\}p^{(1)}_*p_{*tt}^{(2)}\right\|_{H^{-1}}.
 \end{multlined}
\end{aligned}
\end{equation}
Hence, we obtain from above
\begin{equation}\label{f_t_11_L_2}
\begin{aligned}
&\|\partial_t \overline{f}_{11}\|_{L^2(H^{-1})}\\
\lesssim&\,\begin{multlined}[t]\left(\|k(\Theta^{(1)})_*-k(\Theta^{(2)}_*)\|_{L^\infty(L^\infty)}+\|\nabla (k(\Theta^{(1)})_*- k(\Theta^{(2)}_*))\|_{L^\infty(L^3)}\right)\|p^{(1)}_*p_{*ttt}^{(2)}\|_{L^2(H^{-1})}\\
+ \|k(\Theta^{(1)})_*-k(\Theta^{(2)}_*)\|_{L^\infty(L^\infty)}(\| p^{(1)}_{*t}p_{*tt}^{(2)} \|_{L^2(L^2)}+\|p_{*t}^{(1)}p_{*tt}^{(1)}\|_{L^2(L^2)})\\
+ \|\partial_t(k(\Theta^{(1)}_*)-k(\Theta^{(2)}_*))\|_{L^2({L^6})}\|(p_{*t}^{(1)})^2\|_{L^\infty(L^3})\\
+\left\|\partial_t \left\{k(\Theta^{(1)}_*)-k(\Theta^{(2)}_*)\right\}p^{(1)}_*p_{*tt}^{(2)}\right\|_{L^2(H^{-1})}.
 \end{multlined}
\end{aligned}
\end{equation}
 We estimate the second term by using the $H^{-1}$ inequality \eqref{Hneg_estimate} as follows:
\begin{equation}\label{H_-1_p_tt}
\begin{aligned}
\|p^{(1)}_*p_{*ttt}^{(2)}\|_{L^2(H^{-1})}\lesssim&\, \|p_{*ttt}^{(2)}\|_{L^2(H^{-1})}(\|\nabla p^{(1)}_*\|_{L^\infty(L^3)}+\|p^{(1)}_*\|_{L^\infty(L^\infty)})\\
\lesssim&\,\|p_{*ttt}^{(2)}\|_{L^2(H^{-1})}\|\Delta p^{(1)}_*\|_{L^\infty(L^2)}\\
\lesssim&\, R_1^2, 
\end{aligned}
\end{equation} 
where we have also used the embeddings $H^1(\Omega)\hookrightarrow L^3(\Omega)$, $H^2(\Omega) \hookrightarrow L^\infty(\Omega)$, and elliptic regularity. Next, as in \eqref{First_Term_f_1}, we have
\begin{equation}\label{k_1_Theta_Estimate}
\|k(\Theta^{(1)})_*-k(\Theta^{(2)}_*)\|_{L^\infty(L^\infty)}\lesssim (1+R_2^{\gamma_2+1})\Vert (\overline{p}_*, \overline{\Theta}_*)\Vert_{X_T}. 
\end{equation}
\begin{comment}
Further, using the embedding $L^4(\Omega) \hookrightarrow L^3(\Omega)$, we have (see \eqref{nabla_k_Estimate}) 
\begin{equation}\label{L_3_Theta}
\begin{aligned}
\|\nabla (k(\Theta^{(1)})_*- k(\Theta^{(2)}_*))\|_{L^\infty(L^3)}\lesssim &\,\|\nabla (k(\Theta^{(1)})_*- k(\Theta^{(2)}_*))\|_{L^\infty(L^4)}\\
\lesssim&\,  (1+R_2^{\gamma_2}+R_2^{2\gamma_2+2})\Vert \overline{p}_*, \overline{\Theta}_*\Vert_{X_T}. 
\end{aligned}
\end{equation}
\end{comment}
Further,
\begin{equation}\label{p_Theta_Estimate_1}
\begin{aligned}  
& \|p^{(1)}_{*t}p_{*tt}^{(2)}\|_{L^2(L^2)}+\|\partial_t(p_{*t}^{(1)})^2\|_{L^2(L^2)}\\
\lesssim&\,\|p^{(1)}_{*t}\|_{L^2(L^\infty)}\|p_{*tt}^{(2)}\|_{L^\infty(L^2)}+\|p^{(1)}_{*t}\|_{L^2(L^\infty)}\|p_{*tt}^{(1)}\|_{L^\infty(L^2)}\\
\lesssim&\,\|\Delta p^{(1)}_{*t}\|_{L^2(L^2)}\|p_{*tt}^{(2)}\|_{L^\infty(L^2)}+\|\Delta p^{(1)}_{*t}\|_{L^2(L^2)}\|p_{*tt}^{(1)}\|_{L^\infty(L^2)}
\lesssim\, R_1^2. 
\end{aligned}
\end{equation} 
Now, we can use the following re-arrangement: 
\begin{equation}
\begin{aligned}  
\partial_t(k(\Theta^{(1)}_*)-k(\Theta^{(2)}_*))=&\,k^\prime (\Theta^{(1)}_*) \Theta^{(1)}_{*t}-k^\prime (\Theta^{(2)}_*) \Theta^{(2)}_{*t}\\
=&\, k^\prime (\Theta^{(1)}_*)  (\Theta^{(1)}_{*t}-\Theta^{(2)}_{*t})+ \Theta^{(2)}_{*t}(k^\prime (\Theta^{(1)}_*)-k^\prime (\Theta^{(2)}_*)). 
\end{aligned}
\end{equation}
Hence, by the embedding $H^1(\Omega) \hookrightarrow L^{6}(\Omega)$, 
\begin{equation}\label{k_t_Theta_Estimate}
\begin{aligned}
&\|\partial_t(k(\Theta^{(1)}_*)-k(\Theta^{(2)}_*))\|_{L^2({L^6})}\\
\lesssim&\,\begin{multlined}[t] \Vert k^\prime (\Theta^{(1)}_*) \Vert_{L^\infty(L^\infty)} \Vert \Theta^{(1)}_{*t}-\Theta^{(2)}_{*t}\Vert_{L^2({L^6})}\\
+ \Vert \Theta^{(2)}_{*t} \Vert_{L^2({L^6})} \Vert k^\prime (\Theta^{(1)}_*)-k^\prime (\Theta^{(2)}_*) \Vert_{L^\infty(L^\infty)} \end{multlined} \\
\lesssim&\,\begin{multlined}[t] \Vert \Theta^{(1)}_{*t}-\Theta^{(2)}_{*t}\Vert_{L^2(H^1)}\left(1+\Vert \Theta^{(1)}_*\Vert_{L^\infty(L^\infty)}^{\gamma_2+1}\right)\\
+ \Vert \Theta^{(2)}_{*t}\Vert_{L^2(H^1)}\Vert\Theta^{(1)}_*-\Theta^{(2)}_*\Vert_{L^\infty(L^\infty)}\Big(1+\Vert \Theta^{(1)}_*\Vert_{L^\infty(L^\infty)}^{\gamma_2}\Big)\end{multlined} \\
\lesssim&\, (1+R_2+R_2^{\gamma_2+1})\Vert (\overline{p}_*, \overline{\Theta}_*)\Vert_{X_T}. 
\end{aligned}
\end{equation}
Furthermore, we have 
\begin{equation}\label{p_Theta_Estimate_2}
\begin{aligned}
\|(p_{*t}^{(1)})^2\|_{L^\infty(L^3)}
\lesssim\, \Vert p_{*t}^{(1)} \Vert_{L^\infty(L^6)}^2
\lesssim&\, \Vert \nabla p_{*t}^{(1)} \Vert_{L^\infty(L^2)}^2
\lesssim  R_1^2. 
\end{aligned}
\end{equation}
Next by using the embedding $L^{6/5}(\Omega) \hookrightarrow H^{-1}(\Omega)$ and H\"older's inequality, we infer 
\begin{equation}
\begin{aligned}
&\left\|\partial_t \left\{k(\Theta^{(1)}_*)-k(\Theta^{(2)}_*)\right\}p^{(1)}_*p_{*tt}^{(2)}\right\|_{L^2(H^{-1})}\\
\lesssim&\,\|\partial_t(k(\Theta^{(1)}_*)-k(\Theta^{(2)}_*))\|_{L^2(L^3)}\|p^{(1)}_*p_{*tt}^{(2)}\|_{L^\infty(L^2)}
\end{aligned}
\end{equation}
As in \eqref{k_t_Theta_Estimate}, using the embedding $H^1(\Omega) \hookrightarrow L^3(\Omega)$ yields 
 \begin{equation}\label{k_t_L_3}
\|\partial_t(k(\Theta^{(1)}_*)-k(\Theta^{(2)}_*))\|_{L^2(L^3)}\lesssim (1+R_2+R_2^{\gamma_2+1})\Vert (\overline{p}_*, \overline{\Theta}_*)\Vert_{X_T},
\end{equation}
whereas
\begin{equation}\label{p_Theta_Estimate_2}
\begin{aligned}
\|p^{(1)}_*p_{*tt}^{(2)}\|_{L^\infty(L^2)}
\lesssim&\, \Vert \Delta p^{(1)}_* \Vert_{L^\infty(L^2)}\Vert  p_{*tt}^{(2)} \Vert_{L^\infty(L^2)}
\lesssim  R_1^2. 
\end{aligned}
\end{equation}
Consequently, by collecting the derived estimates, we obtain from \eqref{f_t_11},
\begin{equation}\label{f_t_11_Main}
\|\partial_t \overline{f}_{11}\|_{L^2(H^{-1})}\leq C R_1^2 (1+R_2^{\gamma_2}+R_2^{\gamma_2+1}+R_2^{2\gamma_2+2})\Vert (\overline{p}_*, \overline{\Theta}_*)\Vert_{X_T}.  
\end{equation}
Next, we estimate $\overline{f}_{12}=\left\{q(\Theta^{(1)}_*)-q(\Theta^{(2)}_*)\right\}\Delta p^{(2)}_*$. We have $\|\partial_t \overline{f}_{12}\|_{L^2(H^{-1})} \lesssim \|\partial_t \overline{f}_{12}\|_{L^2(L^2)}$ and further
\begin{equation}
\begin{aligned}
\|\partial_t \overline{f}_{12}\|_{L^2(L^2)}\lesssim&\, \begin{multlined}[t] \|q(\Theta^{(1)}_*)-q(\Theta^{(2)}_*)\|_{L^\infty(L^\infty)}\| \Delta p^{(2)}_{*t}\|_{L^2(L^2)}\\
+\|\partial_t(q(\Theta^{(1)}_*)-q(\Theta^{(2)}_*))\|_{L^2(L^4)}\| \Delta p^{(2)}_*\|_{L^\infty(L^4)}. \end{multlined}
\end{aligned}
\end{equation}
Similarly to the estimate of $\|\partial_t \overline{f}_{11}\|_{L^2(L^2)}$ and by using the fact that \begin{equation}
\Vert\Delta p^{(2)}_*\Vert_{L^\infty(L^4)}\lesssim \Vert\Delta p^{(2)}_*\Vert_{L^\infty(L^2)}+\Vert\Delta\nabla  p^{(2)}_*\Vert_{L^\infty(L^2)}\lesssim \Vert p^{(2)}_*\Vert_{X_p},
\end{equation}
we obtain
\begin{equation}\label{f_t_12_Main}
\|\partial_t \overline{f}_{12}\|_{L^2(L^2)}\leq C_T R_1^2 (1+R_2+R_2^{\gamma_1+1})\Vert (\overline{p}_*, \overline{\Theta}_*)\Vert_{X_T}. 
\end{equation}
It remains to estimate $\|\partial_t \overline{f}_{13}\|_{L^2(H^{-1})}$. Indeed, recalling that
\[
\overline{f}_{13}=2k(\Theta^{(2)}_*)\left(\overline{p}_*p_{*tt}^{(2)}+\overline{p}_{*t}(p_{*t}^{(1)}+p_{*t}^{(2)})\right)
\] 
we have
 \begin{equation}\label{partial_t_f13}
 \begin{aligned}
 \|\partial_t \overline{f}_{13}\|_{L^2(H^{-1})}\lesssim&\, \|\partial_t(\overline{p}_*p_{*tt}^{(2)}+\overline{p}_{*t}(p_{*t}^{(1)}+p_{*t}^{(2)}))\|_{L^2(H^{-1})}\\
&+\|k'(\Theta^{(2)}_*) \Theta^{(2)}_{*t}\|_{L^2(L^4)}\|\overline{p}_*p_{*tt}^{(2)}+\overline{p}_{*t}(p_{*t}^{(1)}+p_{*t}^{(2)})\|_{L^\infty(L^4)}. 
\end{aligned}
\end{equation}
We estimate the first term in \eqref{partial_t_f13} as follows:
 \begin{equation}
 \begin{aligned}
&\|\partial_t(\overline{p}_*p_{*tt}^{(2)}+\overline{p}_{*t}(p_{*t}^{(1)}+p_{*t}^{(2)}))\|_{L^2(H^{-1})}\\
\lesssim &\,\begin{multlined}[t]\Vert \overline{p}_{*t} \Vert_{L^2(L^4)}\Vert p_{*tt}^{(2)}\Vert_{L^\infty(L^4)} +(\Vert \overline{p}_{*} \Vert_{L^\infty(L^\infty)}+\Vert \nabla \overline{p}_{*} \Vert_{L^\infty(L^3)})\Vert p_{*ttt}^{(2)}\Vert_{L^2(H^{-1})}\\
+ \Vert \overline{p}_{*tt} \Vert_{L^\infty(L^2)} (\Vert p_{*t}^{(1)}\Vert_{L^2(L^\infty)}+\Vert p_{*t}^{(2)}\Vert_{L^2(L^\infty)})\\
+\Vert \overline{p}_{*t} \Vert_{L^2(L^4)} (\Vert p_{*tt}^{(1)}\Vert_{L^\infty(L^4)}+\Vert p_{*tt}^{(2)}\Vert_{L^\infty (L^4)})\end{multlined} \\
\lesssim &\,\begin{multlined}[t]\Vert \nabla \overline{p}_{*t} \Vert_{L^2(L^2)}\Vert \nabla p_{*tt}^{(2)}\Vert_{L^\infty(L^2)}+\Vert \Delta \overline{p}_{*} \Vert_{L^\infty(L^2)}\Vert p_{*ttt}^{(2)}\Vert_{L^2(H^{-1})}\\  
+ \Vert \overline{p}_{*tt} \Vert_{L^\infty(L^2)} (\Vert \Delta p_{*t}^{(1)}\Vert_{L^2(L^2)}+\Vert\Delta p_{*t}^{(2)}\Vert_{L^2(L^2)})\\
+\Vert \nabla \overline{p}_{*t} \Vert_{L^2(L^2)} (\Vert\nabla  p_{*tt}^{(1)}\Vert_{L^\infty(L^2)}+\Vert \nabla p_{*tt}^{(2)}\Vert_{L^\infty (L^2)}).\end{multlined}
\end{aligned}
\end{equation}
Hence, we obtain 
\begin{equation}\label{First_Term_f13_Main}
 \begin{aligned}
\|\partial_t(\overline{p}_*p_{*tt}^{(2)}+\overline{p}_{*t}(p_{*t}^{(1)}+p_{*t}^{(2)}))\|_{L^2(H^{-1})}
\lesssim &\, R_1 \Vert (\overline{p}_*, \overline{\Theta}_*)\Vert_{X_T}. 
\end{aligned}
\end{equation}
Next, we estimate the second term on the right-hand side of \eqref{partial_t_f13} as: 
 \begin{equation}\label{Second_Term_f13_Main}
 \begin{aligned}
\|k'(\Theta^{(2)}_*) \Theta^{(2)}_{*t}\|_{L^2(L^4)}\lesssim&\, \|k'(\Theta^{(2)}_*) \|_{L^\infty(L^\infty)}\| \Theta^{(2)}_{*t}\|_{L^2(L^4)}\\
\lesssim &\, \big(1+\Vert \Theta^{(2)}_*\Vert_{L^\infty(L^\infty)}^{\gamma_2+1}\big)\| \Theta^{(2)}_{*t}\|_{L^2(H^1)}\\
\lesssim&\,R_2(1+R_2^{\gamma_2+1}). 
\end{aligned}  
\end{equation}
Finally, we estimate the last term on the right-hand side of \eqref{partial_t_f13} as 
\begin{equation}\label{Third_Term_f13}
\begin{aligned}
&\|\overline{p}_*p_{*tt}^{(2)}+\overline{p}_{*t}(p_{*t}^{(1)}+p_{*t}^{(2)})\|_{L^\infty(L^4)}\\
\lesssim &\,\begin{multlined}[t]\|\overline{p}_*\|_{L^\infty(L^\infty)}\|p_{*tt}^{(2)}\|_{L^\infty(L^4)}\\
+\|\overline{p}_{*t}\|_{L^\infty(L^4)}(\,\|p_{*t}^{(1)}\|_{L^\infty(L^\infty)}+\|p_{*t}^{(2)}\|_{L^\infty(L^\infty)})\end{multlined}\\
\lesssim &\,\begin{multlined}[t]\|\Delta \overline{p}_{*}\|_{L^\infty(L^2)}\|\nabla p_{*tt}^{(2)}\|_{L^\infty(L^2)}\\
+\|\nabla \overline{p}_{*t}\|_{L^\infty(L^2)}(\,\|\Delta p_{*t}^{(1)}\|_{L^\infty(L^2)}+\|\Delta p_{*t}^{(2)}\|_{L^\infty(L^2)}).\end{multlined}
\end{aligned}
\end{equation}
Using the embedding $H^1(0,t) \hookrightarrow  C[0,t]$, we find that 
\begin{equation}
\|\nabla \overline{p}_{*t}\|_{L^\infty(L^2)}\lesssim \|\nabla \overline{p}_{*t}\|_{L^2(L^2)}+\|\nabla \overline{p}_{*tt}\|_{L^2(L^2)}\lesssim \Vert \overline{p}_*\Vert_{X_p}. 
\end{equation}
 Consequently, we obtain from \eqref{Third_Term_f13},  
 \begin{equation}\label{Third_Term_f13_Main}
\|\overline{p}_*p_{*tt}^{(2)}+\overline{p}_{*t}(p_{*t}^{(1)}+p_{*t}^{(2)})\|_{L^\infty(L^4)}\lesssim R_1 \Vert (\overline{p}_*,\overline{\Theta}_*)\Vert_{X_T}. 
\end{equation}
Collecting \eqref{First_Term_f13_Main}, \eqref{Second_Term_f13_Main}, and \eqref{Third_Term_f13_Main} results in
\begin{equation}\label{f_t_13_Main}
  \|\partial_t \overline{f}_{13}\|_{L^2(H^{-1})}\lesssim R_1 (1+R_2+R_2^{\gamma_2+2}) \Vert (\overline{p}_*, \overline{\Theta}_*)\Vert_{X_T}. 
\end{equation}
Finally, by collecting the bounds of separate contributions, we infer that
\begin{equation}\label{f_1_t_Main_Estimate}
   \|\partial_t \overline{f}_1\|_{L^2(H^{-1})} \leq C_{T} ( R_1+R_1^2 )(1+R_2+R_2^{\gamma_2+1}+R_2^{\gamma_2+2})\Vert (\overline{p}_*, \overline{\Theta}_*)\Vert_{X_T}.  
\end{equation}
\subsubsection*{The estimate of $\|\overline{f}_2\|_{H^1(L^2)}$}
 We can bound the source term in the heat equation as follows:  
\begin{equation}\label{f_2_H_1}
\|\overline{f}_2\|_{H^1(L^2)} \lesssim \|\mathcal{Q}(p_{*t}^{(1)})-\mathcal{Q}(p_{*t}^{(2)})\|_{L^2(L^2)}+ \|\partial_t(\mathcal{Q}(p_{*t}^{(1)})-\mathcal{Q}(p_{*t}^{(2)}))\|_{L^2(L^2)}.  
\end{equation}  
Since $p_{*t}^{(j)}\in B$ for $j=1,2$, we have by the Sobolev embedding 
\begin{equation}
\Vert p_{*t}^{(j)}\Vert_{L^\infty(L^\infty)} \lesssim  \Vert \Delta p_{*t}^{(j)}\Vert_{L^\infty(L^2)}\lesssim \Vert p_{*t}^{(j)}\Vert_{L^\infty(X_p)}\lesssim R_1. 
\end{equation}
Hence, in view of the assumption \eqref{Lipschitz_Assumption}, this yields 
\begin{equation}\label{Q_Estimate}
\|\mathcal{Q}(p_{*t}^{(1)})-\mathcal{Q}(p_{*t}^{(2)})\|_{L^2(L^2)}\lesssim R_1 \Vert p_{*t}^{(1)}-p_{*t}^{(2)}\Vert_{L^2(L^2)}\lesssim R_1 \Vert p_{*t}^{(1)}-p_{*t}^{(2)}\Vert_{X_p}
\end{equation}
Similarly, using \eqref{Lipschitz_Assumption_2}, we have    
\begin{equation}\label{Q_t_Estimate}
\begin{aligned}
\|\partial_t(\mathcal{Q}(p_{*t}^{(1)})-\mathcal{Q}(p_{*t}^{(2)}))\|_{L^2(L^2)}\lesssim &\,\begin{multlined}[t]\Vert p_{*t}^{(1)}\Vert_{L^2(L^\infty)} \Vert p_{*tt}^{(1)}-p_{*tt}^{(2)}\Vert_{L^\infty(L^2)}\\
+\Vert p_{*tt}^{(2)}\Vert_{L^\infty(L^2)} \Vert p_{*t}^{(1)}-p_{*t}^{(2)}\Vert_{L^2(L^\infty)}\end{multlined}\\
\lesssim&\, \begin{multlined}[t] \Vert \Delta p_{*t}^{(1)}\Vert_{L^2(L^2)} \Vert p_{*tt}^{(1)}-p_{*tt}^{(2)}\Vert_{L^\infty(L^2)}\\
+\Vert p_{*tt}^{(2)}\Vert_{L^\infty(L^2)} \Vert \Delta (p_{*t}^{(1)}-p_{*t}^{(2)})\Vert_{L^2(L^2)}  \end{multlined}\\
\lesssim&\, R_1 \Vert p_{*t}^{(1)}-p_{*t}^{(2)}\Vert_{X_p}
\end{aligned}
\end{equation} 
Plugging \eqref{Q_Estimate} and \eqref{Q_t_Estimate} into \eqref{f_2_H_1}, we obtain 
\begin{equation}\label{partiial_t_f_2_Estimate}
\|\overline{f}_2\|_{H^1(L^2)} \lesssim  R_1\Vert (\overline{p}_*, \overline{\Theta}_*)\Vert_{X_T}. 
\end{equation}    
\subsubsection*{The energy bound for the difference equations.} Now we can apply the energy results of Proposition~\ref{Prop:LinWellp} to system \eqref{System_Contract} by setting
\begin{equation}
\alpha= 1-2k(\Theta_*^{(1)})p_*^{(1)}, \quad r=q(\Theta_*^{(1)}), \quad f_1= \overline{f}_1, \quad f_2=\overline{f}_2.
\end{equation}  
Adding the energy estimate for the pressure to the energy bound \eqref{Heat_bound} for the temperature (where now $\tilde{f}=f_2=\overline{f}_2$), we obtain
\begin{equation}\label{Contrac_1}
\begin{aligned}
&\Vert (\overline{p},\overline{\Theta})\Vert^2_{X_T}\\
=&\, \Vert \mathcal{T}(p^{(1)}_*, \Theta^{(1)}_*)-\mathcal{T}(p^{(2)}_*, \Theta^{(2)}_*)\Vert^2_{X_T}\\
\lesssim&\,\begin{multlined}[t] \int_0^t\exp{\left(\int_s^t (1+\Lambda(\sigma))\textup{d}\sigma\right)}\left(\Vert \overline{f}_1(t)\Vert_{H^1}^2+(1+\left\|\nabla \alpha(t)\right\|_{L^3}^2)\Vert \partial_t \overline{f}_1(t)\Vert_{H^{-1}}^2\right)\ds\\
+  \Vert \partial_t \overline{f}_2 \Vert_{L^2(L^2)}^2 \end{multlined}
\end{aligned}
\end{equation}
with $\Lambda=\Lambda(t)$ defined in \eqref{def_Lamba}. We have 
\begin{equation}\label{Lambda_Estimate}
\begin{aligned}
&\Vert\Lambda\Vert_{L^1(0,T)}\\
\lesssim &\,\left\Vert r_t\right\Vert_{L^1(L^2)}+\left\Vert \alpha_t\right\Vert_{L^1(L^2)}+\|\nabla r\|_{L^1(L^4)}+\Vert \alpha_t\Vert_{L^2(L^4)}^2+\Vert r_t\Vert_{L^2(L^4)}^2\\  
\lesssim&\,\begin{multlined}[t]\Vert q'(\Theta_*^{(1)})\Theta_{*t}^{(1)}\Vert_{L^1(L^2)}+\Vert k'(\Theta_*^{(1)})\Theta_{*t}^{(1)}p_*^{(1)}\Vert_{L^1(L^2)}+\Vert k(\Theta_*^{(1)})p_{*t}^{(1)}\Vert_{L^1(L^2)}\\
+\Vert q'(\Theta_*^{(1)})\nabla\Theta_{*}^{(1)}\Vert_{L^1(L^4)}
+\Vert k'(\Theta_*^{(1)})\Theta_{*t}^{(1)}p_*^{(1)}\Vert_{L^2(L^4)}^2\\
+\Vert k(\Theta_*^{(1)})p_{*t}^{(1)}\Vert_{L^2(L^4)}^2
+\Vert q'(\Theta_*^{(1)})\Theta_{*t}^{(1)}\Vert_{L^2(L^4)}^2.\end{multlined}
\end{aligned}
\end{equation}
We estimate the terms on the right-hand side of \eqref{Lambda_Estimate} as follows: using \eqref{q_prime_Assumption}, we have  
\begin{equation}\label{Estimate_Lambda_First_Term}
\begin{aligned}
\Vert q'(\Theta_*^{(1)})\Theta_{*t}^{(1)}\Vert_{L^1(L^2)}\lesssim&\, \sqrt{T}\Vert q'(\Theta_*^{(1)})\Vert_{L^\infty(L^\infty)} \Vert \Theta_{*t}^{(1)}\Vert_{L^2(L^2)}\\
\lesssim&\, \sqrt{T}(1+\Vert \Theta_*^{(1)}\Vert^{\gamma_1+1}_{L^\infty(L^\infty)}) \Vert \Theta_{*t}^{(1)}\Vert_{L^2(L^2)}\\
\lesssim&\, \sqrt{T} R_2(1+R_2^{\gamma_1+1}). 
\end{aligned}
\end{equation}
Further, by using assumption \eqref{Assumption_k_2} we have
\begin{equation}\label{Estimate_Lambda_Second_Term}
\begin{aligned}
\Vert k'(\Theta_*^{(1)})\Theta_{*t}^{(1)}p_*^{(1)}\Vert_{L^1(L^2)}\lesssim&\, \Vert k'(\Theta_*^{(1)})\Vert_{L^\infty(L^\infty)}\Vert \Theta_{*t}^{(1)}\Vert_{L^2(L^2)}\Vert p_*^{(1)}\Vert_{L^2(L^\infty)}\\
\lesssim&\,\sqrt{T}(1+\Vert \Theta_*^{(1)}\Vert^{\gamma_2+1}_{L^\infty(L^\infty)})\Vert \Theta_{*t}^{(1)}\Vert_{L^2(L^2)}\Vert \Delta p_*^{(1)}\Vert_{L^\infty(L^2)}\\
\lesssim&\,\sqrt{T}R_1(R_2+R_2^{\gamma_2+2}). 
\end{aligned}
\end{equation}
Next we find that
\begin{equation}\label{Estimate_Lambda_Third_Term}
\Vert k(\Theta_*^{(1)})p_{*t}^{(1)}\Vert_{L^1(L^2)}\lesssim T\Vert k(\Theta_*^{(1)})\Vert_{L^\infty(L^\infty)}\Vert p_{*t}^{(1)}\Vert_{L^\infty(L^2)}\lesssim T R_1,
\end{equation}
where we have used \eqref{Assumption_k_1} in the last estimate. Using the bound 
$\Vert \nabla\Theta_{*}^{(1)}\Vert_{L^4}\lesssim \|\Theta_*^{(1)}\|_{\Honetwo}$, we also have 
\begin{equation}\label{Estimate_Lambda_Fourth_Term}
\begin{aligned}
\Vert q'(\Theta_*^{(1)})\nabla\Theta_{*}^{(1)}\Vert_{L^1(L^4)}\lesssim&\, \Vert q'(\Theta_*^{(1)})\Vert_{L^\infty(L^\infty)}\Vert \nabla\Theta_{*}^{(1)}\Vert_{L^1(L^4)}\\
\lesssim&\, T(1+\Vert \Theta_*^{(1)}\Vert^{\gamma_1+1}_{L^\infty(L^\infty)})\Vert \nabla\Theta_{*}^{(1)}\Vert_{L^\infty(\Honetwo)} \\
\lesssim&\, T(R_2+R_2^{\gamma_1+2}). 
\end{aligned}
\end{equation}
Also, we have as above 
\begin{equation}\label{Estimate_Lambda_Fifth_Term}
\begin{aligned}
\Vert k'(\Theta_*^{(1)})\Theta_{*t}^{(1)}p_*^{(1)}\Vert_{L^2(L^4)}^2\lesssim&\, \Vert k'(\Theta_*^{(1)})\Vert_{L^\infty(L^\infty)}^2\Vert \Theta_{*t}^{(1)}\Vert_{L^2(L^4)}^2\Vert p_*^{(1)}\Vert_{L^\infty(L^\infty)}^2\\
\lesssim &\, (1+\Vert \Theta_*^{(1)}\Vert^{2\gamma_2+2}_{L^\infty(L^\infty)})\Vert \Theta_{*t}^{(1)}\Vert_{L^2(H^1)}^2\Vert \Delta p_*^{(1)}\Vert_{L^\infty(L^2)}^2\\  
\lesssim &\, R_1^2 R_2^2 (1+R_2^{2\gamma_2+2}). 
\end{aligned}
\end{equation} 
Further, we have the estimate 
 \begin{equation}\label{Estimate_Lambda_sixth_Term} 
 \begin{aligned}
\Vert k(\Theta_*^{(1)})p_{*t}^{(1)}\Vert_{L^2(L^4)}^2\lesssim \Vert p_{*t}^{(1)}\Vert_{L^2(L^4)}^2\lesssim &\, \Vert\nabla p_{*t}^{(1)}\Vert_{L^2 (L^2)}^2
\lesssim \, R_1^2. 
\end{aligned}
\end{equation}
Finally, we have 
\begin{equation}\label{Estimate_Lambda_Seventh_Term}
 \begin{aligned}
\Vert q'(\Theta_*^{(1)})\Theta_{*t}^{(1)}\Vert_{L^2(L^4)}^2\lesssim&\, \Vert q'(\Theta_*^{(1)})\Vert_{L^\infty(L^\infty)}^2 \Vert \Theta_{*t}^{(1)}\Vert_{L^2(L^4)}^2\\
\lesssim &\,  (1+\Vert \Theta_*^{(1)}\Vert^{2\gamma_1+2}_{L^\infty(L^\infty)})\Vert \Theta_{*t}^{(1)}\Vert_{L^2(H^1)}^2\\
\lesssim &\,R_2^2(1+R_2^{2\gamma_1+2}).
\end{aligned}
\end{equation}
Collecting the above estimates leads to
\begin{equation}\label{Lambda_Estimate_final}
\Vert\Lambda\Vert_{L^1(0,T)}\leq C(T, R_1, R_2),
\end{equation}  
where $C=C(T, R_1, R_2)$ is a positive constant that depends on $T, R_1$, and $R_2$. 

Finally, taking into account \eqref{Lambda_Estimate} and recalling \eqref{nabla_f1_Main}, \eqref{f_1_t_Main_Estimate},  \eqref{Q_Estimate}, and \eqref{partiial_t_f_2_Estimate} we obtain 
\begin{equation}
\begin{aligned}
&\Vert \mathcal{T}(p^{(1)}_*, \Theta^{(1)}_*)-\mathcal{T}(p^{(2)}_*, \Theta^{(2)}_*)\Vert_{X_T}\\
\lesssim&\, e^{C(T, R_1, R_2)} ( R_1+R_1^2 )C(T,R_2)\Vert (p^{(1)}_* -p^{(2)}_*, \Theta^{(1)}_*-\Theta^{(2)}_*)\Vert_{X_T}.
\end{aligned}  
\end{equation}
Thus, by selecting the radius $R_1>0$ sufficiently small, we can guarantee that $\mathcal{T}$ is a strict contraction in $B$.
\end{proof}
On account of Lemmas~\ref{Lemma2} and \ref{Lemma3}, an application of the contraction mapping theorem implies that there exists a unique $(p, \Theta)=\mathcal{T} (p, \Theta)$ in $B$ which solves the coupled problem. 
\subsection*{Continuous dependence on the data}
To prove continuous dependence on the data, take $(p^{(1)}, \Theta^{(1)})$ and $(p^{(2)}, \Theta^{(2)})$ to be two solutions of \eqref{coupled_problem_eq} that correspond to the initial data $(p_0^{(1)},p_1^{(1)} ,\Theta_0^{(1)})$ and $(p_0^{(2)},p_1^{(2)}, \Theta_0^{(2)})$, respectively. Similarly to the proof of contractivity, we have the following energy bound: 
\begin{equation}\label{Uniq_1}
\begin{aligned}
&\, \Vert (p^{(1)}-p^{(2)}, \Theta^{(1)}-\Theta^{(2)})\Vert_{X_T}^2\\
\lesssim&\,\begin{multlined}[t] \mathcal{E}[p^{(1)}-p^{(2)}](0)
+\mathcal{E}[\Theta^{(1)}-\Theta^{(2)}](0)\\
+ \int_0^t\exp{\left(\int_s^t (1+\Lambda(\sigma))\textup{d}\sigma\right)}\left({\Vert \overline{f}_1(t)\Vert_{H^1}^2+(1+\left\|\nabla \alpha(t)\right\|_{L^3}^2)\Vert \partial_t \overline{f}_1(t)\Vert_{H^{-1}}^2}\right)\ds\\
+  C_T \Vert \partial_t \overline{f}_{2} \Vert_{L^2(L^2)}^2. \end{multlined}
\end{aligned}
\end{equation}
Here $\overline{f}_1$ and $\overline{f}_2$ are functions of $p^{(1)}=p^{(1)}_*$ and $p^{(2)}=p^{(2)}_*$; see \eqref{overline_f1} and \eqref{overline_f2} for their definitions. Following the same steps as in the proof of contractivity, we can deduce that there exists a function $\Psi$ that depends on $\Vert p^{(j)}\Vert_{X_p}$ and $\Vert \Theta^{(j)}\Vert_{X_p}$ with $j=1,2$, such that 
\begin{equation}\label{Uniq_1}
\begin{aligned}
&\, \Vert (p^{(1)}-p^{(2)}, \Theta^{(1)}-\Theta^{(2)})\Vert_{X_T}^2\\
\lesssim&\,\begin{multlined}[t]
\Vert (p^{(1)}_0-p^{(2)}_0, \Theta^{(1)}_0-\Theta^{(2)}_0)\Vert_{X_T}^2\\
+\int_0^t \Psi (\Vert p^{(1)}\Vert_{X_p}, \Vert p^{(2)}\Vert_{X_p}, \Vert \Theta^{(1)}\Vert_{X_\Theta},\Vert \Theta^{(2)}\Vert_{X_\Theta})\Vert (p^{(1)}-p^{(2)}, \Theta^{(1)}-\Theta^{(2)})\Vert_{X_T}^2\ds.
\end{multlined}
\end{aligned}
\end{equation}
An  application of Gronwall's inequality leads to 
\begin{equation}
\begin{aligned}
&\Vert (p^{(1)}-p^{(2)}, \Theta^{(1)}-\Theta^{(2)})\Vert_{X_T}^2\\
\lesssim&\, \Vert (p^{(1)}_0-p^{(2)}_0, \Theta^{(1)}_0-\Theta^{(2)}_0)\Vert_{X_T}^2\exp\Big\{\int_0^T\Psi (t)\dt\Big\}.
\end{aligned}
\end{equation}
This last inequality yields the desired result, which also implies uniqueness in $X_T$ by taking the data to be the same.
\end{proof} 
\section*{Conclusion and Outlook}
In this work, we have analyzed the coupled Westervelt--Pennes model of HIFU-induced heating. By relying on the energy analysis of a linearized problem and a subsequent fixed-point argument, we proved the local-in-time well-posedness of this model under the assumption of smooth and (with respect to pressure) small data. \\
\indent Although our well-posedness result does not require any smallness assumption on $T$, it is not a global existence result due to the possible dependence of data on the final time. A global existence result would require to show that there is a universal neighborhood of the origin in the topology induced by the norm of the initial conditions for which the solution exists and is bounded uniformly in time. To achieve this, we would need to refine our energy estimates by constructing compensating functionals to capture the dissipation of appropriate components of the norm of the solution. This analysis will be the subject of future research. \\
\indent We note further that in the energy estimates in Section~\ref{Sec_Energy_Analysis}, $b$ must be a positive constant, independent of $\Theta$. To permit more realistic modeling scenarios, future analysis will involve studying the case $b=b(\Theta)$ and allowing for (time- or space-) fractional damping in the model.
\bibliography{references}{}
\bibliographystyle{siam} 
\end{document}